\Crefname{ALC@unique}{Line}{Lines} 
\newcolumntype{Y}{>{\centering\arraybackslash}X}
\newcommand{\textoverline}[1]{$\overline{\mbox{#1}}$}
\pgfplotsset{compat=newest}
\newcolumntype{C}[1]{>{\centering\arraybackslash}p{#1}}
\DeclareMathOperator*{\argmin}{arg\,min}
\renewcommand{\Re}{\operatorname{Re}}
\DeclareMathOperator*{\sn}{sn}
\DeclareMathOperator*{\cn}{cn}
\DeclareMathOperator*{\dn}{dn}
\DeclareMathOperator*{\sign}{sign}
\DeclareMathOperator{\arctanh}{arctanh}
\newcommand{\LINEIF}[3][default]{%
  \ALC@it\algorithmicif\ #2\ \algorithmicthen%
  \ #3\  \algorithmicendif%
}
\newcommand{\LINEIFELSE}[4][default]{%
  \ALC@it\algorithmicif\ #2\ \algorithmicthen%
  \ #3\ \algorithmicelse\ #4  \algorithmicendif%
}
\begin{document}

\title{Iterations for the Unitary Sign Decomposition and the Unitary Eigendecomposition}

\author{Evan S. Gawlik\thanks{Department of Mathematics, University of Hawai`i at M\textoverline{a}noa (\email{egawlik@hawaii.edu})}\funding{The author was supported by NSF grants DMS-1703719 and DMS-2012427.}}
    
\date{}

\headers{Iterations for the Unitary Sign Decomposition}{Evan S. Gawlik}

\maketitle

\begin{abstract}
We construct fast, structure-preserving iterations for computing the sign decomposition of a unitary matrix $A$ with no eigenvalues equal to $\pm i$.  This decomposition factorizes $A$ as the product of an involutory matrix $S = \operatorname{sign}(A) = A(A^2)^{-1/2}$ times a matrix $N = (A^2)^{1/2}$ with spectrum contained in the open right half of the complex plane.  Our iterations rely on a recently discovered formula for the best (in the minimax sense) unimodular rational approximant of the scalar function $\operatorname{sign}(z) = z/\sqrt{z^2}$ on subsets of the unit circle.  When $A$ has eigenvalues near $\pm i$, the iterations converge significantly faster than Pad\'e iterations.  Numerical evidence indicates that the iterations are backward stable, with backward errors often smaller than those obtained with direct methods.  This contrasts with other iterations like the scaled Newton iteration, which suffers from numerical instabilities if $A$ has eigenvalues near $\pm i$.  As an application, we use our iterations to construct a stable spectral divide-and-conquer algorithm for the unitary eigendecomposition.
\end{abstract}

\begin{keywords}
Matrix sign function, matrix iteration, structure-preserving, unitary eigendecomposition, Zolotarev, minimax, Pad\'e iteration, Newton iteration
\end{keywords}

\begin{AMS}
65F60, 65F15, 41A20, 30E10, 41A50, 15A23
\end{AMS}

\section{Introduction} \label{sec:intro}

Every matrix $A \in \mathbb{C}^{n \times n}$ with no purely imaginary eigenvalues can be written uniquely as a product
\[
A = SN,
\]
where $S \in \mathbb{C}^{n \times n}$ is involutory ($S^2=I$), $N \in \mathbb{C}^{n \times n}$ has spectrum in the open right half of the complex plane, and $S$ commutes with $N$. This is the celebrated matrix sign decomposition~\cite{higham1994matrix}, whose applications are widespread~\cite{denman1976matrix,kenney1995matrix}. In terms of the principal square root $(\cdot)^{1/2}$, we have $S=A(A^2)^{-1/2} =: \mathrm{sign}(A)$ and $N=(A^2)^{1/2}$.

When $A$ is unitary, so too are $S$ and $N$.  It follows that $S=S^{-1}=S^*$, so we may write, for any unitary $A$ with $\Lambda(A) \cap i\mathbb{R} = \emptyset$,
\begin{equation} \label{unitarysign}
A = SN, \quad S^2=I, \; S=S^*, \; N^2=A^2, \; N^*N=I, \, \Lambda(N) \subset \mathbb{C}_+,
\end{equation}
where $\Lambda(N)$ denotes the spectrum of $N$ and $\mathbb{C}_+ = \{z \in \mathbb{C} \mid \Re(z)>0\}$.  We refer to this decomposition as the \emph{unitary sign decomposition}.

We say that an algorithm for computing the decomposition~(\ref{unitarysign}) is backward stable if it computes matrices $\widehat{S}$ and $\widehat{N}$ with the property that the quantities
\begin{equation} \label{backwarderrors}
\|A-\widehat{S}\widehat{N}\|, \, \|\widehat{S}^2-I\|, \, \|\widehat{S}-\widehat{S}^*\|, \, \|\widehat{N}^*\widehat{N}-I\|, \, \|\widehat{N}^2-A^2\|, \, \max\{0,-\min_{\lambda \in \Lambda(\widehat{N})} \Re\lambda\}
\end{equation}
are each a small multiple of the unit roundoff $u$ ($=2^{-53}$ in double-precision arithmetic).\footnote{Note that this property implies $\|\widehat{N}\widehat{S}-\widehat{S}\widehat{N}\|$ is small as well; see Lemma~\ref{lemma:SNcommute}.}   Here, $\|\cdot\|$ denotes the 2-norm.

The goal of this paper is to design backward stable iterations for computing the decomposition~(\ref{unitarysign}).  To illustrate why this is challenging, let us point out the pitfalls of naive approaches.  A widely used iteration for computing the sign of a general matrix $A \in \mathbb{C}^{n \times n}$ is the Newton iteration~\cite{roberts1980linear}~\cite[Section 5.3]{higham2008functions}
\begin{equation} \label{newtonit}
X_{k+1} = \frac{1}{2}(X_k+X_k^{-1}), \quad X_0 = A.
\end{equation}
If $A$ is unitary, then the first iteration is simply
\begin{equation} \label{firstnewtonit}
X_1 = \frac{1}{2}(A+A^*).
\end{equation}
In floating point arithmetic, this calculation is susceptible to catastrophic cancellation if $A$ has eigenvalues near $\pm i$.  Indeed, if we carry out~(\ref{firstnewtonit}) followed by~(\ref{newtonit}) for $k=1,2,\dots$ on the $100 \times 100$ unitary matrix \verb$A = gallery('orthog',100,3)$ from the MATLAB matrix gallery, then the iteration diverges.  Scaling the iterates with standard scaling heuristics~\cite{kenney1992scaling} leads to convergence, but the computed sign of $A$ satisfies $\|\widehat{S}-\widehat{S}^*\| > 0.1$ in typical experiments.  
This happens because $A$ has several eigenvalues lying near $\pm i$.

The above algorithm can be reinterpreted in a different way:  It is computing the unitary factor in the polar decomposition of $(A+A^*)/2$.  Indeed, the Newton iteration $X_{k+1} = \frac{1}{2}(X_k+X_k^{-*})$ for the polar decomposition~\cite[Section 8.3]{higham2008functions} coincides with~(\ref{newtonit}) on Hermitian matrices.  This suggests another   family of potential algorithms:  compute the polar decomposition of $(A+A^*)/2$ via iterative methods or other means.  However, numerical experiments confirm that such algorithms are similarly inaccurate on matrices with eigenvalues near $\pm i$.  This unstable behavior is also shared by the superdiagonal Pad\'e iterations for the matrix sign function~\cite{kenney1991rational}, all of which map eigenvalues $\lambda \approx \pm i$ of $A$ to a small real number (or the inverse thereof) in the first iteration.

One way to overcome these difficulties is to adopt structure-preserving iterations.  Here, we say that an iteration $X_{k+1} = g_k(X_k)$ for the unitary sign decomposition is structure-preserving if the iterates $X_k$ are unitary for every $k$.  
Examples include the diagonal family of Pad\'e iterations~\cite{higham2004computing}, whose lowest-order member is the iteration
\begin{equation} \label{padelow}
X_{k+1} = X_k(3I+X_k^2)(I+3X_k^2)^{-1}, \quad X_0 = A.
\end{equation}
By keeping $X_k$ unitary, a structure-preserving iteration ensures that the eigenvalues of $X_k$ remain on the unit circle, ostensibly skirting the dangers of catastrophic cancellation.  We observe numerically that, if implemented in a clever way (described in Section~\ref{sec:algorithm}), the diagonal Pad\'e iterations are backward stable.  However, they can take excessively long to converge on matrices with eigenvalues near $\pm i$.  For example, when \verb$A = gallery('orthog',100,3)$, the iteration~(\ref{padelow}) takes 34 iterations to converge.

We construct in this paper a family of structure-preserving iterations for the unitary sign decomposition that converge more rapidly---sometimes dramatically more so---than the diagonal Pad\'e iterations.   Numerical evidence indicates that these iterations are backward stable, with backward errors often smaller than those obtained with direct methods.

The key ingredient that we use to construct our iterations is a recently discovered formula for the best (in the minimax sense) unimodular rational approximant of the scalar function $\sign(z) = z/\sqrt{z^2}$ on subsets of the unit circle~\cite{gawlik2020zolotarev}.  Remarkably, it can be shown that composing two such approximants yields a best approximant of higher degree~\cite{gawlik2020zolotarev}, laying the foundations for an iteration.  When applied to matrices, the iteration produces a sequence of unitary matrices $X_0=A$, $X_1$, $X_2$, $\dots$ that converges rapidly to $S=\sign(A)$, often significantly faster than the corresponding diagonal Pad\'e iteration.  When \verb$A = gallery('orthog',100,3)$, for example, the lowest-order iteration converges within 6 iterations, which is about 6 times faster than the corresponding diagonal Pad\'e iteration~(\ref{padelow}).

\paragraph{Prior work}
Matrix iterations constructed from rational minimax approximants have attracted growing interest in recent years.  Early examples include the optimal scaling heuristic proposed by Byers and Xu~\cite{byers2008new} for the Newton iteration for the polar decomposition, as well as an analogous scaling heuristic for the matrix square root proposed by Wachspress~\cite{wachspress1962} and Beckermann~\cite{beckermann2013optimally}.  Nakatsukasa, Bai, and Gygi~\cite{nakatsukasa2010optimizing} designed an optimal scaling heuristic for the Halley iteration for the polar decompsition, and their strategy was generalized to higher order by Nakatsukasa and Freund~\cite{nakatsukasa2016computing}.    The latter work elucidated the link between these scaling heuristics and the seminal work of Zolotarev~\cite{zolotarev1877applications} on rational minimax approximation.  The iterations derived in~\cite{nakatsukasa2016computing} have a variety of applications, including algorithms for the symmetric eigendecomposition, singular value decomposition, polar decomposition, and CS decomposition~\cite{nakatsukasa2016computing,gawlik2018backward}.  

All of the aforementioned algorithms rely crucially on the following fact: if two rational minimax approximants of the scalar function $\sign(x)$ on suitable real intervals are composed with one another, then their composition is a best approximant of higher degree~\cite{nakatsukasa2016computing}.  
A related composition law for rational minimax approximants of $\sqrt{z}$ has been used to construct iterations for the matrix square root~\cite{gawlik2018zolotarev}.  These iterations were generalized to the matrix $p$th root in~\cite{gawlik2020rational} and used to derive approximation theoretic results in~\cite{gawlik2019approximating}.
An even more recent advancement---a composition law for rational minimax approximants of $\sign(z)$ on subsets of the unit circle~\cite{gawlik2020zolotarev}---is what inspired the present paper.

\paragraph{Connections to other iterations}
The iterations we derive in this paper are intimately connected to several existing iterations for the matrix sign function and the polar decomposition.  When applied to a unitary matrix $A$, our iterations produce a sequence of unitary matrices whose Hermitian part coincides with the sequence of matrices generated by Nakatsuka and Freund's iterations~\cite{nakatsukasa2016computing} for the polar decomposition of $(A+A^*)/2$.  A special case of this result is a connection between our lowest-order iteration for $\sign(A)$ and the optimally scaled Halley iteration for the polar decomposition of $(A+A^*)/2$~\cite{nakatsukasa2010optimizing}.  It is important to note that these equivalences hold only in exact arithmetic.  In floating-point arithmetic, our iterations behave very differently from the aforementioned algorithms.

There is also a link between our iterations and the diagonal Pad\'e iterations.  Roughly speaking, our iterations are designed using rational minimax approximants of $\sign(z)$ on two circular arcs containing $\pm 1$.  If these arcs are each shrunk to a point, then the diagonal Pad\'e iterations are recovered.  This helps to explain the slow convergence of the diagonal Pad\'e iterations on unitary matrices with eigenvalues near $\pm i$:  The iterations need to approximate $\sign(z)$ near $z = \pm i$, but they use rational functions that are designed to approximate $\sign(z)$ near $z = \pm 1$.

\paragraph{Unitary eigendecomposition}
Our emphasis on handling eigenvalues near $\pm i$ is not merely pedantic.  It is precisely the sort of situation that one often encounters if the unitary sign decomposition is used as part of a spectral divide-and-conquer algorithm for the unitary eigendecomposition.  

Indeed, consider a unitary matrix $A \in \mathbb{C}^{m \times m}$ with eigendecomposition $A=V\Lambda V^*$.  The matrix $(I+\sign(A))/2$ is a spectral projector onto the invariant subspace $\mathcal{V}_+$ of $A$ associated with eigenvalues having positive real part.
A spectral divide-and-conquer algorithm uses this projector to find orthonormal bases $U_1 \in \mathbb{C}^{m \times m_1}$, $U_2 \in \mathbb{C}^{m \times m_2}$, $m_1+m_2=m$, for $\mathcal{V}_+$ and its orthogonal complement.
Then $\begin{pmatrix} U_1 & U_2 \end{pmatrix}^* A \begin{pmatrix} U_1 & U_2 \end{pmatrix}$ is block diagonal, so recursion can be used to determine $V$ and $\Lambda$.  At each step, scalar multiplication by complex numbers with unit modulus can be used to rotate the spectrum so that it is distributed approximately evenly between the left and right half-planes.  If $A$ has a cluster of nearby eigenvalues, then it is reasonable to expect this process to  center the cluster near $\pm i$ at some step.  This is precisely what we observe in practice, and the ability to compute the unitary sign decomposition quickly and accurately in the presence of eigenvalues near $\pm i$ becomes paramount.

\paragraph{Organization} 
This paper is organized as follows.  We begin in Section~\ref{sec:scalarsign} by studying rational minimax approximants of $\sign(z)$ on the unit circle.  This material is largely drawn from~\cite{gawlik2020zolotarev}, but we add some additional results and insights to relate these approximants to Pad\'e approximants.  Next, we use these approximants to construct matrix iterations for the unitary sign decomposition in Section~\ref{sec:algorithm}.  We illustrate their utility by constructing a spectral divide-and-conquer algorithm for the unitary eigendecomposition in Section~\ref{sec:eig}.  We conclude with numerical examples in Section~\ref{sec:numerical}.

\section{Rational Approximation of the Sign Function on the Unit Circle} \label{sec:scalarsign}

In this section, we study rational approximants of the scalar function $\sign(z) = z/\sqrt{z^2}$ on the set
\[
\mathbb{S}_\Theta = \{z \in \mathbb{C} \mid |z|=1, \, \arg z \notin (\Theta,\pi-\Theta) \cup (-\pi+\Theta,-\Theta) \},
\]
where $\Theta \in (0,\pi/2)$.  Since our ultimate interest is in constructing structure-preserving iterations for the unitary sign decomposition, we focus on rational functions $r$ satisfying $|r(z)|=1$ for $|z|=1$.  We call such rational functions unimodular.  Unimodular rational functions have the property that $r(A)$ is unitary for any unitary matrix $A$.

The problem of determining the best (in the minimax sense) unimodular rational approximant of $\sign(z)$ on $\mathbb{S}_\Theta$ has recently been solved in~\cite{gawlik2020zolotarev}.  To describe the solution, let us introduce some notation.  We use $\mathrm{sn}(\cdot,\ell)$, $\mathrm{cn}(\cdot,\ell)$, and $\mathrm{dn}(\cdot,\ell)$ to denote Jacobi's elliptic functions with modulus $\ell$, and we use $\ell' = \sqrt{1-\ell^2}$ to denote the modulus complementary to $\ell$.  We denote the complete elliptic integral of the first kind by $K(\ell) = \int_0^{\pi/2} (1-\ell^2 \sin^2\theta)^{-1/2} \, d\theta$.  We say that a rational function $r(z)=p(z)/q(z)$ has type $(m,n)$ if $p$ and $q$ are polynomials of degree at most $m$ and $n$, respectively.

\begin{theorem}
Let $\Theta \in (0,\pi/2)$ and $n \in \mathbb{N}_0$.  Among all rational functions $r$ of type $(2n+1,2n+1)$ that satisfy $|r(z)|=1$ for $|z|=1$, the ones which minimize
\[
\max_{z \in \mathbb{S}_\Theta} \left| \arg\left(\frac{r(z)}{\sign(z)}\right) \right|
\]
are
\[
r(z) = r_{2n+1}(z;\Theta) =  z \prod_{j=1}^n \frac{z^2+a_j}{1+a_j z^2}
\]
and its reciprocal, where
\[
a_j = a_j(\Theta) = \left( \frac{\ell \sn(v_j,\ell') + \dn(v_j,\ell')}{\cn(v_j,\ell')} \right)^{2(-1)^{j+n}},
\]
$v_j = \frac{2j-1}{2n+1}K(\ell')$, $\ell = \cos\Theta$, and $\ell' = \sqrt{1-\ell^2} = \sin\Theta$.
\end{theorem}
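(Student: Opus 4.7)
The plan is to reduce this minimax problem on the unit circle to Zolotarev's classical third problem on the real line, and then invoke its known solution. First, I would apply the Möbius map $z = (1+it)/(1-it)$, which carries $\mathbb{R} \cup \{\infty\}$ bijectively onto the unit circle, sending $t=0$ to $z=1$ and $t=\infty$ to $z=-1$. Under this map, the two arcs comprising $\mathbb{S}_\Theta$ pull back to a symmetric pair of real intervals $[-b,-a] \cup [a,b]$, with the ratio $a/b$ determined by $\Theta$ through $\ell = \cos\Theta$. Because $\sign(z) = z/\sqrt{z^2}$ equals $+1$ on the arc containing $z=1$ and $-1$ on the arc containing $z=-1$, the target function transports (up to a unimodular factor) to $\sign(t)$ on the two intervals.

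Next, I would characterize the admissible rational functions. The unimodularity condition $|r(z)|=1$ on $|z|=1$ is equivalent to the self-inversive relation $q(z) = c\, z^{2n+1}\overline{p(1/\bar z)}$ with $|c|=1$, where $r=p/q$. Combining this with the odd symmetry $r(-z)=-r(z)$ that is forced upon any best approximant of the odd function $\sign$, one finds that $r$ must take the form $r(z) = \varepsilon z \prod_{j=1}^n (z^2+a_j)/(1+a_j z^2)$ with real $a_j > 0$ and $\varepsilon \in \{+1,-1\}$. Under the Cayley pullback, such $r$ correspond precisely to odd, real-coefficient rational functions of type $(2n+1,2n+1)$ mapping $\mathbb{R}$ to $\mathbb{R}$, and a short computation shows that $\arg(r(z)/\sign(z))$ for $z \in \mathbb{S}_\Theta$ transforms, up to a monotone change of variable, into the signed error $\sign(t) - R(t)$ on $[-b,-a] \cup [a,b]$.

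At this point the problem becomes Zolotarev's classical third problem: find the best rational approximant of type $(2n+1,2n+1)$ to $\sign(t)$ on $[-b,-a] \cup [a,b]$ in the uniform norm. Its solution is an odd rational function whose poles and zeros are given in closed form by Jacobi elliptic functions evaluated at the equispaced nodes $v_j = (2j-1)K(\ell')/(2n+1)$. Translating those poles and zeros back through the Cayley map yields the constants $a_j(\Theta)$ in the stated formula. Optimality is inherited from Zolotarev's problem via the standard Chebyshev alternation characterization, while uniqueness (up to the reciprocal) comes from the parity and unimodularity constraints together with the fact that the reciprocal $1/r_{2n+1}$ satisfies the same alternation pattern with reversed sign.

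The main obstacle is verifying the correspondence between the argument error on $\mathbb{S}_\Theta$ and the uniform error on two real intervals. One must check that the extremal structure survives the Cayley pullback: equioscillation of $\sign(t)-R(t)$ on $[-b,-a]\cup[a,b]$ must translate into equioscillation of $\arg(r(z)/\sign(z))$ on $\mathbb{S}_\Theta$, and no admissible perturbation of $r$ on the circle should correspond to an inadmissible perturbation on the line. Once this equivalence is carefully established, the explicit formula falls out directly from Zolotarev's classical result, and the only remaining work is bookkeeping with the Jacobi elliptic identities to present the coefficients in the symmetric form $a_j^{2(-1)^{j+n}}$ used in the statement.
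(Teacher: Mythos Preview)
The paper itself does not prove this theorem; it cites an earlier work. Your proposal attempts an actual reduction to Zolotarev's third problem, which is the right general strategy, but the specific M\"obius map you chose is wrong. Under $z=(1+it)/(1-it)$ one has $t=\tan(\theta/2)$ when $z=e^{i\theta}$, so the arc of $\mathbb{S}_\Theta$ around $z=1$ pulls back to the single interval $[-\tan(\Theta/2),\tan(\Theta/2)]$, while the arc around $z=-1$ pulls back to the two unbounded rays $|t|\ge\cot(\Theta/2)$. Thus $\mathbb{S}_\Theta$ does \emph{not} become $[-b,-a]\cup[a,b]$, and $\sign(z)$ (which separates the right half-circle from the left half-circle) does not pull back to $\sign(t)$ (which separates $t>0$ from $t<0$). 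Your Cayley map sends $\pm 1$ to $0$ and $\infty$; the reduction you have in mind requires instead a map sending $\pm i$ to $0$ and $\infty$, so that the excluded arcs near $\pm i$ become neighborhoods of $0$ and $\infty$ and the two components of $\mathbb{S}_\Theta$ land on symmetric bounded intervals.

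With a corrected map (or, more naturally, the two-to-one map $x=\Re z=(z+z^{-1})/2$, which sends $\mathbb{S}_\Theta$ onto $[-1,-\cos\Theta]\cup[\cos\Theta,1]$ and underlies the identity $\Re r_{2n+1}(z;\Theta)=\widehat{R}_{2n+1}(\Re z;\cos\Theta)$ stated later in the paper), the strategy can be made to work. But there is a second gap: the \emph{argument} error $|\arg(r(z)/\sign(z))|$ on the circle and the \emph{uniform} error $|\sign(t)-R(t)|$ on the interval are not the same functional, and one is not merely a reparametrization of the other. The equivalence of minimizers is obtained by transporting the equioscillation characterization through the change of variables, not by equating error values, so your phrase ``transforms, up to a monotone change of variable, into the signed error'' hides nontrivial work that you would need to make explicit.
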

\begin{proof}
See~\cite[Theorem 2.1 and Remark 2.2]{gawlik2020zolotarev}.
\end{proof}
\begin{remark}
For simplicity, we have chosen to focus only on best unimodular rational approximants of $\sign(z)$ on $\mathbb{S}_\Theta$ of type $(2n+1,2n+1)$ in this paper.  
Best approximants of type $(2n,2n)$ can also be written down; see~\cite{gawlik2020zolotarev} for details.
\end{remark}

The rational function $r_{2n+1}(z;\Theta)$ has the following remarkable behavior under composition.

\begin{theorem} \label{thm:composition}
Let $\Theta \in (0,\pi/2)$, $m,n \in \mathbb{N}_0$, and $\widetilde{\Theta} = \left| \arg(r_{2n+1}(e^{i\Theta}; \Theta)) \right|$.  Then
\[
r_{2m+1}(r_{2n+1}(z;\Theta); \widetilde{\Theta}) = r_{(2m+1)(2n+1)}(z;\Theta).
\]
\end{theorem}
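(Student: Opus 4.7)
My plan is to show that the composition $q(z) := r_{2m+1}(r_{2n+1}(z;\Theta);\widetilde{\Theta})$ is a best type-$((2m+1)(2n+1),(2m+1)(2n+1))$ unimodular approximant of $\sign(z)$ on $\mathbb{S}_\Theta$, and then invoke the uniqueness clause of the preceding theorem. A degree count shows $q$ has type at most $((2m+1)(2n+1),(2m+1)(2n+1))$, and $q$ is unimodular on $|z|=1$ since both constituents are. Because $r_{2n+1}(0;\Theta)=0=r_{2m+1}(0;\widetilde{\Theta})$, we have $q(0)=0$, which rules out the reciprocal branch $1/r_{(2m+1)(2n+1)}(\cdot;\Theta)$ (which would have a pole at the origin) appearing in the uniqueness statement.

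The heart of the proof is a two-part analysis of the inner map $r_{2n+1}(\cdot;\Theta)$. First, I would show that it sends $\mathbb{S}_\Theta$ into $\mathbb{S}_{\widetilde{\Theta}}$ and preserves $\sign$: by Chebyshev equioscillation the maximum of $|\arg(r_{2n+1}(z;\Theta)/\sign(z))|$ on $\mathbb{S}_\Theta$ is attained at the endpoint $z = e^{i\Theta}$, hence equals $\widetilde{\Theta}$ by the definition of $\widetilde{\Theta}$; combined with $\sign(z)\in\{\pm 1\}$ on $\mathbb{S}_\Theta$ this places $r_{2n+1}(z;\Theta)$ within angular distance $\widetilde{\Theta}$ of $\sign(z)$, forcing $r_{2n+1}(z;\Theta)\in \mathbb{S}_{\widetilde{\Theta}}$ and $\sign(r_{2n+1}(z;\Theta))=\sign(z)$. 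Second, I would show that $r_{2n+1}(\cdot;\Theta)$ is an exact $(2n+1)$-to-one covering of each arc of $\mathbb{S}_{\widetilde{\Theta}}$: as $z=e^{i\theta}$ traverses an arc of $\mathbb{S}_\Theta$, the equioscillation of $\arg r_{2n+1}(e^{i\theta};\Theta)$ between $\pm\widetilde{\Theta}$ forces the image arc to be swept exactly $2n+1$ times.

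With these two facts in hand, set $w = r_{2n+1}(z;\Theta)$; since $\sign(w)=\sign(z)$, the error of $q$ simplifies to
\[
\arg\!\left(\frac{q(z)}{\sign(z)}\right) \;=\; \arg\!\left(\frac{r_{2m+1}(w;\widetilde{\Theta})}{\sign(w)}\right).
\]
The right-hand side equioscillates between its extremal values as $w$ traverses $\mathbb{S}_{\widetilde{\Theta}}$, with the signature characterizing the best type-$(2m+1,2m+1)$ unimodular approximant. Pulling back through the $(2n+1)$-fold covering multiplies the number of equioscillation excursions by $2n+1$, yielding precisely the equioscillation signature of the best type-$((2m+1)(2n+1),(2m+1)(2n+1))$ approximant. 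Uniqueness then forces $q = r_{(2m+1)(2n+1)}(\cdot;\Theta)$.

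The main obstacle is rigorously verifying the exact $(2n+1)$-fold covering, and in particular the strict monotonicity of $\arg r_{2n+1}(e^{i\theta};\Theta)$ between consecutive equioscillation nodes. I would argue from the explicit parametrization: the nodes $v_j = \frac{2j-1}{2n+1}K(\ell')$ divide $[0,K(\ell')]$ into $2n+1$ subintervals on which the relevant Jacobi functions are monotone, which under a change of variables relating $u\in[0,K(\ell')]$ to $\theta\in[0,\Theta]$ yields monotonicity of $\arg r_{2n+1}(e^{i\theta};\Theta)$ on the corresponding subintervals. Combined with the equioscillation boundary values $\pm\widetilde{\Theta}$, the intermediate value theorem then delivers the covering.
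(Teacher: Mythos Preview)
The paper does not prove this theorem in-text; it simply cites \cite[Theorem~3.3 and Remark~3.6]{gawlik2020zolotarev}.  Your proposal is an honest sketch of the argument that reference carries out, and indeed the paper itself alludes to exactly this technique (``counting extrema'') when discussing the analogous composition law~(\ref{Rcomp}) for $\widehat{R}_{2n+1}$.  So your route and the paper's (deferred) route coincide in spirit: show the composition has the right type, is unimodular, and exhibits the full equioscillation signature, then invoke uniqueness.

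Your outline is sound, with two places that would need tightening before it becomes a proof.  First, the claim that the maximum error is attained at the endpoint $z=e^{i\Theta}$ (so that $\widetilde{\Theta}$ really is the minimax error) is not an immediate consequence of ``Chebyshev equioscillation'' in the abstract; it is true here, but it comes from the explicit elliptic-function construction of $r_{2n+1}$, not from general alternation theory.  Second, the exact $(2n+1)$-fold covering and the resulting equioscillation count need to be made precise: you must verify that the error of $r_{2n+1}(\cdot;\Theta)$ on one arc of $\mathbb{S}_\Theta$ has exactly $2n+2$ alternation points (including both endpoints), so that composing with the $2m+2$ alternations of the outer map produces $(2m+1)(2n+1)+1$ alternations on each arc---enough to characterize the degree-$(2m+1)(2n+1)$ minimizer.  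You gesture at the elliptic parametrization for this, which is the right tool; the cited reference does exactly that computation.
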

\begin{proof}
See~\cite[Theorem 3.3 and Remark 3.6]{gawlik2020zolotarev}.
\end{proof}

We also have the following error estimate.

\begin{theorem} \label{thm:error}
Let $\Theta \in (0,\pi/2)$ and $n \in \mathbb{N}_0$.  We have
\[
\max_{z \in \mathbb{S}_\Theta} \left| \arg\left(\frac{r_{2n+1}(z;\Theta)}{\sign(z)}\right) \right| \le 4 \rho^{-(2n+1)},
\]
where
\begin{equation} \label{rho}
\rho = \rho(\Theta) = \exp\left( \frac{ \pi K(\cos\Theta) }{ 2K(\sin\Theta) } \right).
\end{equation}
\end{theorem}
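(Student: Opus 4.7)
The plan is to obtain this bound by reducing the problem on the unit circle to Zolotarev's classical fourth problem on the real line, for which the error estimate is classical. The key observation is that a Möbius transformation sending the unit circle to $\mathbb{R} \cup \{\infty\}$ converts the unimodular rational approximation of $\operatorname{sign}(z)$ on $\mathbb{S}_\Theta$ into the real rational approximation of $\operatorname{sign}(w)$ on a symmetric pair of real intervals $[-1,-\ell'] \cup [\ell',1]$ (with $\ell' = \sin\Theta$), whose best approximant is known in closed form.

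First, I would apply the Cayley-type transformation $w = \phi(z)$ (for example $w = i(1-z)/(1+z)$, followed by rescaling) that sends $\mathbb{S}_\Theta$ onto $[-1,-\alpha] \cup [\alpha,1]$ for an $\alpha$ expressible in terms of $\Theta$; a direct computation with $\arg z = \pm\Theta$ determines $\alpha$ in terms of $\tan(\Theta/2)$, and the ratio of interval endpoints works out so that the relevant elliptic modulus is $\ell = \cos\Theta$ with complementary modulus $\ell' = \sin\Theta$. Next, I would show that unimodular rational functions of type $(2n+1,2n+1)$ on the circle correspond under $\phi$ to real rational functions of the same type satisfying the Zolotarev symmetry $r(-w) = -r(w)$, and that $r_{2n+1}(z;\Theta)$ corresponds precisely to the Zolotarev extremal rational function on $[-\alpha^{-1},-\alpha]\cup[\alpha,\alpha^{-1}]$ (this consistency is guaranteed by the minimax characterization in the preceding theorem).

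Then I would translate the argument error on the circle into the supremum approximation error on the line. For unimodular values $e^{i\theta}, e^{i\psi}$, one has $|\arg(e^{i\theta}/e^{i\psi})| \le \frac{\pi}{2}|e^{i\theta} - e^{i\psi}|$ in any regime where the error is small, but more usefully, writing $r(z)/\operatorname{sign}(z) = e^{i\phi(z)}$ and using the equioscillation property on $\mathbb{S}_\Theta$, the maximum arg-error coincides with $2\arctan\bigl(E_n/(2-E_n)\bigr)$ or a comparable monotonic function of the standard Zolotarev error $E_n$ on the interval side. Finally, I would invoke the classical Zolotarev bound (see e.g.\ the treatment in~\cite{nakatsukasa2016computing}): the best real rational approximant of $\operatorname{sign}$ of type $(2n+1,2n+1)$ on the relevant real set has error bounded by $2\sum_{k=0}^\infty \rho^{-(2n+1)(2k+1)} \le 2\rho^{-(2n+1)}/(1-\rho^{-2(2n+1)})$, with $\rho$ exactly as in~(\ref{rho}) because both problems share the modulus $\ell = \cos\Theta$; converting back to the arg-error picks up at most a factor of $2$, producing the stated constant $4$.

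The main obstacle I expect is the precise bookkeeping in two places: first, verifying that the Cayley image of $r_{2n+1}(z;\Theta)$ really is the classical Zolotarev approximant of appropriate type (which requires matching the elliptic-function formulas given for the $a_j$ against the classical expressions for Zolotarev's extremal rational), and second, obtaining the constant $4$ rather than a larger constant when converting from the standard sup-norm error on the real intervals to the arg-error on $\mathbb{S}_\Theta$. A workable alternative, if the Cayley route proves cumbersome, is a direct approach using the explicit product formula for $r_{2n+1}(z;\Theta)$: one shows by a theta-function identity that $\arg(r_{2n+1}(z;\Theta)/\operatorname{sign}(z))$ equioscillates on $\mathbb{S}_\Theta$, identifies the extremal value with an elliptic-function quantity, and bounds it by expanding the relevant theta series, whose geometric tail produces the $\rho^{-(2n+1)}$ decay with an easily controlled constant.
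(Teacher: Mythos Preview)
The paper does not give a self-contained proof here; it simply cites \cite[Theorem~3.2]{gawlik2020zolotarev} and remarks that the $\rho$ there differs by a factor of $2$ in the exponent.  Your proposal is an attempt at an actual argument, so let me comment on it directly.

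The Cayley step does not do what you claim.  With $w=i(1-z)/(1+z)$ one has $e^{i\theta}\mapsto\tan(\theta/2)$, so the arc of $\mathbb{S}_\Theta$ around $z=1$ lands on $[-\tan(\Theta/2),\tan(\Theta/2)]$ while the arc around $z=-1$ lands on $\{|w|\ge\cot(\Theta/2)\}$, not on a bounded pair $[-1,-\alpha]\cup[\alpha,1]$.  Worse, $\operatorname{sign}(z)$ on the circle ($+1$ on the right half-circle, $-1$ on the left) is carried to the function that is $+1$ for $|w|<1$ and $-1$ for $|w|>1$, which is \emph{not} $\operatorname{sign}(w)$.  Thus Zolotarev's fourth problem does not appear, and the interval parameter $\ell'=\sin\Theta$ you quote does not emerge from this map.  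No subsequent real M\"obius rescaling repairs this, because any such map preserving $\mathbb{R}\cup\{\infty\}$ keeps the ``inside/outside'' character of the two image sets.

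The reduction that works cleanly is the one built into Theorem~\ref{thm:realpart} of this paper: the two-to-one real-part map $z\mapsto x=\Re z=\tfrac12(z+z^{-1})$ on $|z|=1$ sends $\mathbb{S}_\Theta$ onto $[-1,-\cos\Theta]\cup[\cos\Theta,1]$, carries $\operatorname{sign}(z)$ to $\operatorname{sign}(x)$, and carries $r_{2n+1}(z;\Theta)$ (via its real part) to the classical Zolotarev function $\widehat{R}_{2n+1}(x;\cos\Theta)$.  Writing $r_{2n+1}(z;\Theta)/\operatorname{sign}(z)=e^{i\psi}$ on $\mathbb{S}_\Theta$ gives
\[
1-\cos\psi \;=\; 1-\widehat{R}_{2n+1}(\Re z;\cos\Theta),
\]
which is bounded by (twice) the classical Zolotarev error $E_{2n+1}$ on $[\cos\Theta,1]$.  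Since $|\psi|\le 2\arcsin\sqrt{E_{2n+1}}$, the classical rate $\exp\!\bigl(-(2n+1)\pi K(\cos\Theta)/K(\sin\Theta)\bigr)$ becomes $\rho^{-(2n+1)}$ with $\rho$ as in~\eqref{rho}; the square root is precisely the ``factor of $2$ in the exponent'' the paper alludes to, and the constant $4$ drops out of the $\arcsin$ bound.  Your fallback ``direct theta-series'' route is also sound and closer in spirit to how such estimates are usually established.
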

\begin{proof}
See~\cite[Theorem 3.2]{gawlik2020zolotarev}, and note that their definition of $\rho$ differs from ours by a factor of 2 in the exponent.
\end{proof}

\begin{remark} \label{remark:theta0}
Theorems~\ref{thm:composition} and~\ref{thm:error} continue to hold when $\Theta=0$ if we adopt the convention that $\rho(0) = \infty$, $\mathbb{S}_0 = \{-1,1\}$, and $r_{2n+1}(z;0) = z \prod_{j=1}^n \frac{z^2+a_j(0)}{1+a_j(0) z^2}$.  We elaborate on this fact below.
\end{remark}

\subsection{Connections with Other Rational Approximants} \label{sec:connections}

The rational function $r_{2n+1}(z;\Theta)$ is closely connected to several other well-known rational approximants of $\sign(z)$.  

\begin{proposition} \label{prop:pade}
As $\Theta \rightarrow 0$, $r_{2n+1}(z;\Theta)$ converges coefficientwise to $z p_n(z^2)$, where $p_n(z)$ is the type-$(n,n)$ Pad\'e approximant of $z^{-1/2}$ at $z=1$.
\end{proposition}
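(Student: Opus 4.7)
The plan is threefold: compute the pointwise limit of each $a_j(\Theta)$ as $\Theta\to 0$, pass to the limit in the product formula for $r_{2n+1}(z;\Theta)$, and identify the resulting rational function with $zp_n(z^2)$ via the uniqueness of the Pad\'e approximant.

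For the first two steps, note that as $\Theta\to 0$ we have $\ell=\cos\Theta\to 1$ and $\ell'=\sin\Theta\to 0$, so $K(\ell')\to K(0)=\pi/2$, $v_j\to v_j^\star:=(2j-1)\pi/(2(2n+1))$, and Jacobi's elliptic functions degenerate via $\sn(\cdot,0)=\sin$, $\cn(\cdot,0)=\cos$, $\dn(\cdot,0)=1$. Substituting yields
\[
a_j^\star:=\lim_{\Theta\to 0}a_j(\Theta)=\left(\frac{1+\sin v_j^\star}{\cos v_j^\star}\right)^{2(-1)^{j+n}}.
\]
Since the coefficients of the finite product $z\prod_{j=1}^n(z^2+a_j)/(1+a_j z^2)$ are polynomial in $a_1,\dots,a_n$, coefficientwise convergence $r_{2n+1}(z;\Theta)\to r_{2n+1}(z;0):=z\prod_{j=1}^n(z^2+a_j^\star)/(1+a_j^\star z^2)$ follows immediately.

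For the identification step, write $r_{2n+1}(z;0)=zP(z^2)/Q(z^2)$ with $P(w)=\prod_j(w+a_j^\star)$ and $Q(w)=\prod_j(1+a_j^\star w)$. By uniqueness of the type-$(n,n)$ Pad\'e approximant of $w^{-1/2}$ at $w=1$, it suffices to verify the matching condition $P(w)/Q(w)-w^{-1/2}=O((w-1)^{2n+1})$ as $w\to 1$, equivalently that the polynomial $zP(z^2)-Q(z^2)$, which has degree at most $2n+1$, is a scalar multiple of $(z-1)^{2n+1}$; the analogous condition at $z=-1$ is then automatic since $r_{2n+1}(z;0)$ is odd. As sanity checks, the cases $n=0,1$ are easy: for $n=0$ the identity reduces to $z-1=z-1$, and for $n=1$ one computes $a_1^\star=3$, giving $z(z^2+3)-(1+3z^2)=(z-1)^3$ and recovering the Pad\'e iteration~(\ref{padelow}).

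The heart of the argument is establishing this $(2n+1)$-fold contact for general $n$. The most conceptual route exploits the minimax characterization from~\cite{gawlik2020zolotarev}: for each $\Theta>0$, the equioscillation of the error of the best unimodular approximant forces $r_{2n+1}(z;\Theta)-\sign(z)$ to have $2n+1$ zeros on the arc $\{e^{i\theta}:|\theta|\le\Theta\}\subset\mathbb{S}_\Theta$ (where $\sign\equiv 1$). These zeros are trapped on the arc and must coalesce to $z=1$ as $\Theta\to 0$, producing a zero of order at least $2n+1$ of the limit at $z=1$. If extracting this zero count from the minimax theory turns out to be inconvenient, the fallback is direct verification by expanding both sides of the target identity in powers of $(z-1)$ and matching coefficients using trigonometric product identities for the nodes $v_j^\star$.
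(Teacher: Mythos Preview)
Your argument is correct and takes a different route from the paper. The paper's proof is a one-line citation of \cite[Proposition~3.9]{gawlik2020zolotarev}, which already establishes that $\sqrt{z}/r_{2n+1}(\sqrt{z};\Theta)\to 1/p_n(z)$ coefficientwise; the present proposition follows by rearrangement. You instead argue from first principles: the degeneration $\ell'\to 0$ of the elliptic functions yields the explicit limits $a_j^\star$, coefficientwise convergence of the product is immediate, and the identification with $zp_n(z^2)$ is made by showing that the $2n+1$ zeros of $r_{2n+1}(z;\Theta)-1$, which equioscillation pins to the shrinking arc $\{e^{i\theta}:|\theta|\le\Theta\}$, coalesce at $z=1$. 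Since the numerator $zP(z^2)-Q(z^2)$ is monic of degree $2n+1$, this forces it to equal $(z-1)^{2n+1}$, and Pad\'e uniqueness finishes the job.

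Two remarks. First, your key step---that $r_{2n+1}(z;\Theta)-1$ has exactly $2n+1$ zeros on the arc---is correct but is itself a consequence of the equioscillation structure proved in \cite{gawlik2020zolotarev}; so while your argument is more self-contained and more illuminating (it explains the Pad\'e limit as a confluence of interpolation conditions), it still leans on the same reference, just a different part of it. Second, your fallback of ``direct verification via trigonometric product identities'' is a valid safety net in principle, but carrying it out for general $n$ would amount to reproving the equioscillation count by other means; the conceptual route you outline is the cleaner of the two and is already adequate once the alternation count is granted.
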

\begin{proof}
This is a consequence of~\cite[Proposition 3.9]{gawlik2020zolotarev}, where it is shown that $\sqrt{z}/r_{2n+1}(\sqrt{z};\Theta)$ converges coefficientwise to $1/p_n(z)$ as $\Theta \rightarrow 0$.
\end{proof}

In the notation of Remark~\ref{remark:theta0}, the above proposition states that
\[
r_{2n+1}(z;0) = zp_n(z^2).
\]
This rational function has been studied extensively in~\cite{kenney1991rational,kenney1994hyperbolic,gomilko2012pade}~\cite[Theorem 5.9]{higham2008functions}.  It satisfies~\cite[Theorem 5.9]{higham2008functions}
\begin{equation} \label{tanh}
z p_n(z^2) = \tanh((2n+1)\arctanh z) 
\end{equation}
It also has the following properties. Both $p_n(z)$ and $zp_n(z^2)$ are unimodular~\cite{higham2004computing}; that is, for any $n \in \mathbb{N}_0$,
\[
|zp_n(z^2)| = |p_n(z)|=1, \text{ if } |z|=1.
\]
Under composition, we have~\cite[Theorem 5.9)(c)]{higham2008functions}
\begin{equation} \label{composition0}
r_{2m+1}(r_{2n+1}(z;0);0) = r_{(2m+1)(2n+1)}(z;0)
\end{equation}
for any $m,n \in \mathbb{N}_0$.  Finally, $r_{2n+1}(1;0)=-r_{2n+1}(-1;0)=1$ for all $n \in \mathbb{N}_0$.  These last two facts justify Remark~\ref{remark:theta0}.

The rational functions $r_{2n+1}(z;0)$, $n \in \mathbb{N}_0$, have been used in~\cite{kenney1991rational} to construct iterations for computing the matrix sign function.  The iterations constitute the diagonal family of Pad\'e iterations.   The first few diagonal Pad\'e approximants of $z^{-1/2}$ at $z=1$ are
\[
p_0(z)=1, \; p_1(z) = \frac{3+z}{1+3z}, \; p_2(z) = \frac{5+10z+z^2}{1+10z+5z^2}, \; p_3(z) = \frac{7+35z+21z^2+z^3}{1+21z+35z^2+7z^3}.
\]
More generally, Pad\'e iterations can be constructed from rational functions of the form $zp_{m,n}(z^2)$, where $p_{m,n}(z)$ is the type-$(m,n)$ Pad\'e approximant of $z^{-1/2}$ at $z=1$.   However, when $m\neq n$, the Pad\'e iterations are not structure-preserving, as $|p_{m,n}(z)| \not\equiv 1$ for $|z|=1$ and $m \neq n$.

We now turn our attention back to the rational function $r_{2n+1}(z,\Theta)$ with positive $\Theta$.  Interestingly, this function is intimately connected to the solution of another rational approximation problem: approximating $\sign(x)$ on the union of real intervals $[-1,-\ell] \cup [\ell,1]$.

\begin{theorem} \label{thm:realpart}
Let $\Theta \in [0,\pi/2)$ and $n \in \mathbb{N}_0$.  For $z \in \mathbb{C}$ with $|z|=1$, we have
\begin{equation} \label{realpart}
\Re r_{2n+1}(z;\Theta) = \widehat{R}_{2n+1}(\Re z; \cos\Theta),
\end{equation}
where 
\[
\widehat{R}_m(x;\ell) =
\begin{cases} 
\frac{R_m(x;\ell)}{\max_{y \in [\ell,1]} R_m(y;\ell)} &\mbox{ if } \ell \in (0,1), \\
x p_n(x^2), &\mbox{ if } \ell=1,
\end{cases}
\]
and
\[
R_m(\cdot;\ell) = \argmin_{R \in \mathcal{R}_{m,m}} \max_{x \in [-1,-\ell] \cup [\ell,1]} |R(x)-\sign(x)|.
\]
\end{theorem}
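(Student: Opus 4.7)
The plan is to prove~(\ref{realpart}) in two stages: first reduce $\Re r_{2n+1}(z;\Theta)$ to a rational function of $\Re z$, and then identify that rational function with $\widehat R_{2n+1}(\cdot;\cos\Theta)$ via a uniqueness argument.

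For the reduction stage, I observe that since the $a_j$ are real, $r:=r_{2n+1}(\cdot;\Theta)$ has real coefficients, so $\overline{r(z)}=r(\bar z)=r(1/z)$ for $|z|=1$; combined with $|r(z)|=1$, this yields $2\Re r(z)=r(z)+r(1/z)$. Writing $r(z)=zP(z^2)/Q(z^2)$ with $P(w)=\prod_j(w+a_j)$ and $Q(w)=\prod_j(1+a_jw)$, the identity $Q(w)=w^nP(1/w)$ lets me rearrange
\[
r(z)+r(1/z) \;=\; \frac{z^2P(z^2)^2+Q(z^2)^2}{zP(z^2)Q(z^2)} \;=\; \frac{z^{1-2n}P(z^2)^2+z^{2n-1}P(z^{-2})^2}{P(z^2)P(z^{-2})}
\]
after multiplying numerator and denominator by $z^{-(2n+1)}$. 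Both the resulting numerator and denominator are Laurent polynomials in $z$ invariant under $z\mapsto 1/z$, hence polynomials in $z+z^{-1}=2\Re z$ via Chebyshev's substitution. It follows that $\Re r(z)=\Phi(\Re z)$ for some rational function $\Phi$ of type $(2n+1,2n)$. Moreover $\Phi$ is odd (because $r(-z)=-r(z)$ together with $\cos(\pi-\theta)=-\cos\theta$) and satisfies $|\Phi(x)|\le 1$ on $[-1,1]$ (because $|r(z)|=1$ on the unit circle).

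For the identification stage, I would use the best-approximation property of $r_{2n+1}(\cdot;\Theta)$ from the first theorem of Section~\ref{sec:scalarsign}. On the arc of $\mathbb S_\Theta$ containing $z=1$, $\sign(z)=1$, so $|\arg(r(z)/\sign(z))|=|\arg r(z)|$ and $\Phi(\Re z)=\cos(\arg r(z))$. The equioscillation of $\arg r$ around $0$ inherited from the minimax property then translates, under the change of variables $x=\Re z=\cos\theta$, into an equioscillation of $\Phi$ on $[\ell,1]$ between its maximum $1$ (attained where $\arg r=0$) and its minimum $\cos E$ (attained where $\arg r=\pm E$), with $E$ the approximation error from Theorem~\ref{thm:error}. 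By oddness, the mirror pattern holds on $[-1,-\ell]$. This equioscillation structure, together with the type $(2n+1,2n)$ and the normalization $\max_{[\ell,1]}\Phi=1$, matches exactly the equioscillation characterization of the scaled Zolotarev rational function $\widehat R_{2n+1}(\cdot;\ell)$; classical uniqueness of best rational approximants then forces $\Phi=\widehat R_{2n+1}(\cdot;\cos\Theta)$. The endpoint case $\Theta=0$ follows by passing to the limit in stage~1, invoking Proposition~\ref{prop:pade} and the convention of Remark~\ref{remark:theta0} so that $\widehat R_{2n+1}(x;1)=xp_n(x^2)$ emerges consistently.

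The main obstacle is the bookkeeping in the identification stage: one must carefully count the alternations of $\arg r$ on $\mathbb S_\Theta$, translate them into alternations of $\Phi=\cos(\arg r)$ on $[\ell,1]$ (accounting for the fact that $\cos$ folds $\pm E$ into a single value, so each pair of $\arg r$-extrema contributes one min of $\Phi$ plus one interior max at the intervening zero of $\arg r$), and confirm that the resulting count suffices to uniquely identify $\widehat R_{2n+1}(\cdot;\ell)$ within the class of odd type-$(2n+1,2n)$ rationals with maximum $1$ on $[\ell,1]$. An alternative, more computational route is to expand both sides in terms of the Jacobi elliptic functions appearing in the explicit formulas for $a_j$ and for the Zolotarev extremal function and verify equality by direct manipulation; this bypasses the equioscillation count at the cost of invoking nontrivial elliptic-function identities.
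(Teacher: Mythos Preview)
Your proposal is a plausible self-contained argument, but it differs substantially from the paper's proof, which is much shorter. For $\Theta\in(0,\pi/2)$ the paper simply cites~\cite[Theorem~2.4]{gawlik2020zolotarev}; the equioscillation-counting route you sketch in stage~2 is presumably close to what that reference does, so you are in effect re-deriving the cited result. Your stage~1 reduction is clean and correct, and the identification in stage~2 can be made rigorous along the lines you indicate (rescale $\Phi$ so that it equioscillates about $\sign(x)$, count alternation points, and invoke uniqueness of the Zolotarev extremal), though as you note the alternation count needs care.

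Where your approach diverges most from the paper is the case $\Theta=0$. The paper does \emph{not} take a limit; instead it uses the closed form $r_{2n+1}(z;0)=zp_n(z^2)=\tanh\big((2n+1)\arctanh z\big)$ from~(\ref{tanh}) and verifies directly, via the identities $\arctanh x=\log\big(i\frac{1+z}{1-z}\big)$ (for $x=\tfrac12(z+1/z)$, $|z|=1$) and $\tfrac12(\tanh y+\coth y)=\coth(2y)$, that $\tfrac12\big(r(z)+1/r(z)\big)=xp_n(x^2)$. This is elementary and self-contained. Your limiting argument, by contrast, quietly assumes $\lim_{\ell\to1}\widehat R_{2n+1}(\cdot;\ell)=xp_n(x^2)$, i.e.\ that the Zolotarev function degenerates to the Pad\'e approximant as the gap closes; this is true but is not established in the paper (the definition $\widehat R_{2n+1}(x;1)=xp_n(x^2)$ is a convention, not a proved limit), so you would need to supply it separately. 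The paper's hyperbolic computation sidesteps this entirely.
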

\begin{proof}
This identity is proven for $\Theta \in (0,\pi/2)$ in~\cite[Theorem 2.4]{gawlik2020zolotarev}.  To see that it also holds when $\Theta = 0$, we must show that if $|z|=1$ and $x = \Re z = \frac{1}{2}(z+1/z)$, then
\[
\frac{1}{2} \left( \tanh((2n+1)\arctanh z) + \frac{1}{\tanh((2n+1)\arctanh z)} \right) = \tanh((2n+1)\arctanh x).
\]
Since $\frac{1+x}{1-x} = -\left(\frac{1+z}{1-z}\right)^2$, we have $\arctanh x = \frac{1}{2}\log\left(\frac{1+x}{1-x}\right) = \log\left(i \frac{1+z}{1-z}\right)$.  Thus,
\begin{equation} \label{tanhleft}
\tanh((2n+1)\arctanh x) = \tanh\left( (2n+1)\log\left(i \frac{1+z}{1-z}\right) \right).
\end{equation}
On the other hand, the identity $\tanh(2y) = \frac{2\tanh y}{1+\tanh^2 y}$ shows that
\begin{align}
\frac{1}{2} &\left( \tanh((2n+1)\arctanh z) + \frac{1}{\tanh((2n+1)\arctanh z)}) \right) \nonumber \\
&= \coth((4n+2)\arctanh z) \nonumber \\
&= \coth\left( (2n+1)\log\left(\frac{1+z}{1-z}\right)\right). \label{tanhright}
\end{align}
Since $(2n+1)\log\left(\frac{1+z}{1-z}\right)$ differs from $(2n+1)\log\left(i \frac{1+z}{1-z}\right)$ by an odd multiple of $\frac{\pi i}{2}$, it follows that~(\ref{tanhleft}) and~(\ref{tanhright}) are equal.
\end{proof}

Written another way, the lemma above states that
\begin{equation} \label{rplusrinv}
\frac{1}{2} \left( r_{2n+1}(z;\Theta) + \frac{1}{r_{2n+1}(z;\Theta)} \right) = \widehat{R}_{2n+1}\left( \frac{z+1/z}{2}; \cos\Theta \right)
\end{equation}
for all $z$ with $|z|=1$.  In particular,
\[
\frac{1}{2}\left( zp_n(z^2) + \frac{1}{z p_n(z^2)} \right) = \left( \frac{z+1/z}{2}\right) p_n\left( \left(\frac{z+1/z}{2}\right)^2 \right).
\]
Since these equalities hold on the unit circle, they hold on all of $\mathbb{C}$.

By combining~(\ref{composition0}),~(\ref{realpart}), and Theorem~\ref{thm:composition}, one sees that the function $\widehat{R}_{2n+1}(x;\ell)$ satisfies
\begin{equation} \label{Rcomp}
\widehat{R}_{2m+1}(\widehat{R}_{2n+1}(x,\ell),\widetilde{\ell}) = \widehat{R}_{(2m+1)(2n+1)}(x,\ell), \quad \text{ if } \widetilde{\ell} = \widehat{R}_{2n+1}(\ell,\ell)
\end{equation}
for all $m,n \in \mathbb{N}_0$ and all $\ell \in [0,1)$.
This equality was derived in~\cite{nakatsukasa2016computing} for $\ell \in (0,1)$ by counting extrema of $\widehat{R}_{2m+1}(\widehat{R}_{2n+1}(x,\ell),\widetilde{\ell})-\sign(x)$.
It can be leveraged to construct iterations for the matrix sign function, and such iterations are particularly well-suited for computing the sign of a Hermitian matrix $B$ (which coincides with the unitary factor in the polar decomposition of $B$); see~(\ref{zolopd1}-\ref{zolopd2}) below.

\section{Algorithm} \label{sec:algorithm}

\subsection{Matrix Iteration}

Theorem~\ref{thm:composition} suggests the following iteration for computing the sign of a unitary matrix $A$ with spectrum contained in $\mathbb{S}_\Theta$, $\Theta \in [0,\pi/2)$:
\begin{align}
X_{k+1} &= r_{2n+1}(X_k; \Theta_k), & X_0 &= A,\label{zolo1} \\
\Theta_{k+1} &= |\arg r_{2n+1}(e^{i\Theta_k};\Theta_k)|,  & \Theta_0 &= \Theta. \label{zolo2}
\end{align}

Below we summarize the properties of the iteration~(\ref{zolo1}-\ref{zolo2}).
\begin{proposition}
The iteration~(\ref{zolo1}-\ref{zolo2}) is structure-preserving.  That is, if $A$ is unitary, then $X_k$ is unitary for every $k \ge 0$.
\end{proposition}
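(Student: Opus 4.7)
The plan is to proceed by induction on $k$. The base case $k=0$ holds because $X_0=A$ is unitary by hypothesis. For the inductive step, assume $X_k$ is unitary; the goal is then to show that $X_{k+1}=r_{2n+1}(X_k;\Theta_k)$ is unitary. The proposition therefore reduces to a single claim: a unimodular rational function applied to a unitary matrix yields a unitary matrix.

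To prove this claim, I would invoke the spectral theorem for unitary matrices to write $X_k = U_k D_k U_k^*$, where $U_k$ is unitary and $D_k=\operatorname{diag}(d_1,\ldots,d_m)$ has $|d_j|=1$ for each $j$. Setting $r = r_{2n+1}(\cdot;\Theta_k)$, the functional calculus gives $r(X_k) = U_k\operatorname{diag}(r(d_1),\ldots,r(d_m))U_k^*$, provided $r$ has no poles at any of the $d_j$. Since $r$ is unimodular on the unit circle, $|r(d_j)|=1$ for each $j$, so $r(D_k)$ is diagonal with entries of modulus one, hence unitary; conjugation by the unitary matrix $U_k$ preserves unitarity, so $X_{k+1}=r(X_k)$ is unitary.

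The only detail to verify is that $r_{2n+1}(\cdot;\Theta_k)$ has no poles on the unit circle, so the functional calculus is well-defined at the spectrum of $X_k$. This is essentially forced by unimodularity itself: a pole at $z_0$ with $|z_0|=1$ would make $|r(z)|\to\infty$ as $z\to z_0$ along the unit circle, contradicting the identity $|r(z)|\equiv 1$ on $|z|=1$. Alternatively, the explicit factorization $r_{2n+1}(z;\Theta) = z\prod_{j=1}^n\frac{z^2+a_j}{1+a_j z^2}$ reveals that the potential poles satisfy $z^2=-1/a_j$, and one can check from the formula for $a_j$ that $|a_j|\neq 1$, so these poles lie off the unit circle. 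I do not expect a serious obstacle here; the proposition is essentially a direct consequence of the unimodularity of $r_{2n+1}(\cdot;\Theta)$ combined with the functional calculus for normal matrices.
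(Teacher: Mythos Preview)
Your proof is correct and follows essentially the same approach as the paper: both reduce the claim to the observation that a unimodular rational function maps a unitary matrix to a unitary matrix. The paper states this in a single sentence without elaboration, whereas you spell out the induction, the diagonalization via the spectral theorem, and the absence of poles on the unit circle; these details are all sound and simply make explicit what the paper leaves implicit.
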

\begin{proof}
Since $|r_{2n+1}(z;\Theta_k)|=1$ for every scalar $z$ with unit modulus, $r_{2n+1}(X; \Theta_k)$ is unitary for every unitary matrix $X$.
\end{proof}

\begin{theorem}
Let $A$ be a unitary matrix with spectrum contained in $\mathbb{S}_\Theta$ for some $\Theta \in (0,\pi/2)$.
For any $n \in \mathbb{N}$, the iteration~(\ref{zolo1}-\ref{zolo2}) converges to $\sign(A)$ with order of convergence $2n+1$.  In fact,
\begin{equation} \label{errorbound}
\|\log(X_k \sign(A)^{-1})\| \le 4 \rho^{-(2n+1)^k},
\end{equation}
for every $k \ge 0$, where $\rho$ is given by~(\ref{rho}).
\end{theorem}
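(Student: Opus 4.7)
The plan is to reduce the error bound~(\ref{errorbound}) to the scalar composition law (Theorem~\ref{thm:composition}) and the scalar error estimate (Theorem~\ref{thm:error}). The key intermediate claim is that for every $k\ge 0$,
\[
X_k = r_{(2n+1)^k}(A;\Theta), \qquad \Theta_k = \bigl|\arg r_{(2n+1)^k}(e^{i\Theta};\Theta)\bigr|.
\]
Once this identity is established, Theorem~\ref{thm:error} applied at degree $(2n+1)^k$ controls the spectrum of $X_k\,\sign(A)^{-1}$, and unitarity then converts the pointwise angular bound into the desired 2-norm bound.

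I would prove the intermediate claim by induction on $k$. The base case $k=0$ uses the fact that the $n=0$ member of the family is $r_1(z;\Theta)=z$ (the product in the definition of $r_{2n+1}$ is empty), so $X_0=A=r_1(A;\Theta)$ and $\Theta_0=\Theta=|\arg r_1(e^{i\Theta};\Theta)|$. For the inductive step, write $(2n+1)^k=2p+1$. The inductive hypothesis supplies exactly the auxiliary parameter required by Theorem~\ref{thm:composition}: the quantity $\Theta_k = |\arg r_{2p+1}(e^{i\Theta};\Theta)|$ is the $\widetilde\Theta$ for which $r_{2n+1}(r_{2p+1}(z;\Theta);\Theta_k) = r_{(2n+1)^{k+1}}(z;\Theta)$. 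Setting $z=A$ gives $X_{k+1} = r_{(2n+1)^{k+1}}(A;\Theta)$, and setting $z=e^{i\Theta}$ (so that $r_{2p+1}(e^{i\Theta};\Theta) = e^{\pm i\Theta_k}$, with the ambiguous sign absorbed by the real-coefficient symmetry $r_{2n+1}(\bar w;\Theta_k)=\overline{r_{2n+1}(w;\Theta_k)}$) gives $\Theta_{k+1} = |\arg r_{(2n+1)^{k+1}}(e^{i\Theta};\Theta)|$, closing the induction.

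With the identity in hand, diagonalize the unitary matrix $A = V\Lambda V^*$ with eigenvalues $\lambda_j\in\mathbb{S}_\Theta$; then
\[
X_k\,\sign(A)^{-1} = V\operatorname{diag}(\mu_j)V^*, \qquad \mu_j = \frac{r_{(2n+1)^k}(\lambda_j;\Theta)}{\sign(\lambda_j)},
\]
which is unitary (hence normal) with each $|\mu_j|=1$. Theorem~\ref{thm:error}, applied at degree $(2n+1)^k$, yields $|\arg\mu_j| \le 4\rho^{-(2n+1)^k}$ for every $j$. Because $X_k\sign(A)^{-1}$ is normal, its 2-norm equals its spectral radius, so taking the principal logarithm gives $\|\log(X_k\sign(A)^{-1})\| = \max_j|\arg\mu_j| \le 4\rho^{-(2n+1)^k}$, which is precisely~(\ref{errorbound}). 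Order-$(2n+1)$ convergence is then immediate from the identity $4\rho^{-(2n+1)^{k+1}} = 4^{-2n}\bigl(4\rho^{-(2n+1)^k}\bigr)^{2n+1}$.

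The main technical obstacle is the bookkeeping of the auxiliary parameter $\Theta_k$: Theorem~\ref{thm:composition} applies only when the outer rational function is evaluated at precisely the angle produced by the inner rational function on the boundary of its approximation set, and verifying this invariant at every step is what allows the induction to go through. Once the invariant is secured, everything else is a mechanical consequence of the scalar theory of Section~\ref{sec:scalarsign} combined with the spectral calculus available for unitary matrices.
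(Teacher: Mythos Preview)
Your proposal is correct and follows essentially the same route as the paper: invoke the composition law to identify $X_k$ with $r_{(2n+1)^k}(A;\Theta)$, then apply the scalar error estimate eigenvalue-by-eigenvalue and use normality to pass to the $2$-norm of the logarithm. You spell out the induction on $k$ (tracking $\Theta_k$ as well) and the order-of-convergence arithmetic more explicitly than the paper does, but the substance is identical.
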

\begin{proof}
By Theorem~\ref{thm:composition}, we have
\[
X_k = r_{(2n+1)^k}(A; \Theta)
\]
for every $k \ge 0$.  Thus, every eigenvalue of $X_k \sign(A)^{-1}$ is of the form $r_{(2n+1)^k}(\lambda;\Theta) / \sign(\lambda)$ for some eigenvalue $\lambda$ of $A$.  
By Theorem~\ref{thm:error},
\begin{align*}
\|\log(X_k \sign(A)^{-1})\|
&= \max_{\lambda \in \Lambda(X_k \sign(A)^{-1})} |\arg\lambda| \\
&= \max_{\lambda \in \Lambda(A)} \left| \arg\left( \frac{r_{(2n+1)^k}(\lambda;\Theta)}{\sign(\lambda)} \right)\right| \\
&\le \max_{z \in \mathbb{S}_\Theta} \left| \arg\left(\frac{r_{(2n+1)^k}(z;\Theta)}{\sign(z)}\right) \right| \\
&\le 4 \rho^{-(2n+1)^k}.
\end{align*}
\end{proof}

\subsection{Connections with Other Iterations}

There is an intimate connection between the iteration~(\ref{zolo1}-\ref{zolo2}) and several existing iterations for the matrix sign function.  First, Proposition~\ref{prop:pade} implies that~(\ref{zolo1}-\ref{zolo2}) reduces to the diagonal Pad\'e iteration when we set $\Theta=0$:
\begin{equation} \label{pade}
X_{k+1} = X_k p_n(X_k^2), \quad X_0 = A.
\end{equation}

Second, there is a link between the iteration~(\ref{zolo1}-\ref{zolo2}) and the iteration 
\begin{align}
Y_{k+1} &= \widehat{R}_{2n+1}(Y_k; \ell_k), & Y_0 &= B,\label{zolopd1} \\
\ell_{k+1} &= \widehat{R}_{2n+1}(\ell_k;\ell_k),  & \ell_0 &= \ell, \label{zolopd2}
\end{align}
which was introduced in~\cite{nakatsukasa2016computing} to compute the sign of a Hermitian matrix $B$ with spectrum contained in $[-1,-\ell] \cup [\ell,1]$.  Note that~(\ref{zolopd1}-\ref{zolopd2}) reduces to
\begin{align} 
Y_{k+1} &= Y_k p_n(Y_k^2), &\quad Y_0 &= B \label{padepd}
\end{align}
when we set $\ell=1$ and ignore the spectrum of $B$.  This is the same iteration as~(\ref{pade}), but with a starting matrix labelled $B$ rather than $A$.

\begin{proposition}
Let $A$ be a unitary matrix with no eigenvalues equal to $\pm i$.  Let $n \in \mathbb{N}$ and $\Theta \in [0,\pi/2)$.
If $B = (A+A^*)/2$ and $\ell = \cos\Theta$, then the iterations~(\ref{zolo1}-\ref{zolo2}) and~(\ref{zolopd1}-\ref{zolopd2}) generate sequences satisfying
\[
Y_k = \frac{1}{2}(X_k+X_k^*), \text{ and } \ell_k = \cos\Theta_k
\]
for every $k \ge 0$.
\end{proposition}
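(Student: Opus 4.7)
The proof proceeds by induction on $k$. The base case $k=0$ is immediate: by hypothesis, $Y_0 = B = \tfrac{1}{2}(A+A^*) = \tfrac{1}{2}(X_0 + X_0^*)$, and $\ell_0 = \ell = \cos\Theta = \cos\Theta_0$. So the real work is in the inductive step, where we assume both identities at level $k$ and derive them at level $k+1$.

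For the first identity, the key ingredient is equation~(\ref{rplusrinv}), which I would regard as an equality of rational functions in $z$ (true on $|z|=1$ and therefore everywhere except at poles). Because $X_k$ is unitary (by the structure-preserving property already established), we have $X_k^{-1} = X_k^*$, and the same holds for $X_{k+1} = r_{2n+1}(X_k;\Theta_k)$. Substituting $X_k$ for $z$ in~(\ref{rplusrinv}) gives the matrix identity
\[
\tfrac{1}{2}\bigl( r_{2n+1}(X_k;\Theta_k) + r_{2n+1}(X_k;\Theta_k)^{-1} \bigr) \;=\; \widehat{R}_{2n+1}\!\left(\tfrac{X_k+X_k^{-1}}{2};\,\cos\Theta_k\right).
\]
The left-hand side is $\tfrac{1}{2}(X_{k+1}+X_{k+1}^*)$, and applying the inductive hypotheses $X_k^{-1}=X_k^*$ (again, unitarity), $\tfrac{1}{2}(X_k+X_k^*) = Y_k$, and $\cos\Theta_k = \ell_k$ identifies the right-hand side as $\widehat{R}_{2n+1}(Y_k;\ell_k) = Y_{k+1}$.

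For the second identity, I would use Theorem~\ref{thm:realpart}. Since $|r_{2n+1}(e^{i\Theta_k};\Theta_k)|=1$, writing $r_{2n+1}(e^{i\Theta_k};\Theta_k) = e^{i\phi}$ with $\phi \in (-\pi,\pi]$ gives $|\phi| = \Theta_{k+1}$ and hence $\cos\Theta_{k+1} = \cos\phi = \Re\, r_{2n+1}(e^{i\Theta_k};\Theta_k)$. Theorem~\ref{thm:realpart} applied with $z = e^{i\Theta_k}$ and $\Theta = \Theta_k$ then yields
\[
\cos\Theta_{k+1} \;=\; \widehat{R}_{2n+1}(\cos\Theta_k;\cos\Theta_k) \;=\; \widehat{R}_{2n+1}(\ell_k;\ell_k) \;=\; \ell_{k+1},
\]
using the inductive hypothesis in the middle step.

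The only subtle point, and therefore the main obstacle, is the passage from the scalar identity~(\ref{rplusrinv}) to its matrix counterpart. This requires verifying that both sides make sense as functions of a unitary matrix (no poles are encountered, since poles of $r_{2n+1}(\cdot;\Theta_k)$ lie off the unit circle and $X_k$ is unitary) and that the composition $\widehat{R}_{2n+1}(\,\cdot\,;\cos\Theta_k)$ commutes with evaluation on a Hermitian matrix. Both are routine once one observes that $\tfrac{1}{2}(X_k+X_k^{-1})$ is Hermitian (as $X_k$ is unitary) and that $r_{2n+1}(X_k;\Theta_k)$ and $\widehat{R}_{2n+1}(\tfrac{1}{2}(X_k+X_k^{-1});\cos\Theta_k)$ are both polynomials/rational functions in $X_k$ whose scalar versions coincide on the spectrum of $X_k$.
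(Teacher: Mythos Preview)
Your induction argument is correct, and it differs in organization from the paper's proof. The paper first unrolls both iterations via the composition laws (Theorem~\ref{thm:composition} and~(\ref{Rcomp})) to obtain the closed forms $X_k = r_{(2n+1)^k}(A;\Theta)$, $\Theta_k = |\arg r_{(2n+1)^k}(e^{i\Theta};\Theta)|$, $Y_k = \widehat{R}_{(2n+1)^k}(B;\ell)$, $\ell_k = \widehat{R}_{(2n+1)^k}(\ell;\ell)$, and then applies~(\ref{rplusrinv}) and Theorem~\ref{thm:realpart} once, at degree $(2n+1)^k$. You instead apply~(\ref{rplusrinv}) and Theorem~\ref{thm:realpart} at fixed degree $2n+1$ inside the inductive step. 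Your route is slightly more self-contained, since it does not invoke the composition theorems at all; the paper's route has the side benefit of exhibiting the closed-form iterates, which is useful elsewhere (e.g., in the error bound). Both arguments ultimately rest on the same two identities, so the difference is one of packaging rather than substance.
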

\begin{proof}
It follows from Theorem~\ref{thm:composition} that in the iteration~(\ref{zolo1}-\ref{zolo2}), we have
\[
X_k = r_{(2n+1)^k}(A;\Theta), \quad \Theta_k = |\arg r_{(2n+1)^k}(e^{i\Theta};\Theta)|,
\]
for each $k \ge 0$.  On the other hand, the composition law~(\ref{Rcomp}) implies that in the iteration~(\ref{zolopd1}-\ref{zolopd2}), we have
\[
Y_k = \widehat{R}_{(2n+1)^k}(B;\ell), \quad \ell_k = \widehat{R}_{(2n+1)^k}(\ell;\ell),
\]
for each $k \ge 0$.
Thus, by~(\ref{rplusrinv}),
\begin{align*}
\frac{1}{2}(X_k+X_k^*) 
&= \frac{1}{2}(X_k+X_k^{-1}) \\
&=  \frac{1}{2}\left(r_{(2n+1)^k}(A;\Theta) + r_{(2n+1)^k}(A;\Theta)^{-1}\right) \\
&= \widehat{R}_{(2n+1)^k}((A+A^{-1})/2;\cos\Theta) \\
&= \widehat{R}_{(2n+1)^k}(B;\ell) \\
&= Y_k.
\end{align*}
Also, by Theorem~\ref{thm:realpart},
\[
\cos\Theta_k = \Re e^{i\Theta_k} = \Re r_{(2n+1)^k}(e^{i\Theta};\Theta) = \widehat{R}_{(2n+1)^k}(\Re e^{i\Theta};\cos\Theta) = \widehat{R}_{(2n+1)^k}(\ell;\ell) = \ell_k.
\]
\end{proof}

In the case that $\Theta=0$, the above result implies a connection between the diagonal Pad\'e iterations~(\ref{pade}) and~(\ref{padepd}).

\begin{corollary}
Let $A$ be a unitary matrix with no eigenvalues equal to $\pm i$, and let $n \in \mathbb{N}$.  If $B = (A+A^*)/2$, then the diagonal Pad\'e iterations~(\ref{pade}) and~(\ref{padepd}) generate sequences satisfying
\[
Y_k = \frac{1}{2}(X_k+X_k^*)
\]
for every $k \ge 0$.
\end{corollary}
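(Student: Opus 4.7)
The corollary is the direct specialization of the preceding proposition to the parameter values $\Theta = 0$ and $\ell = \cos\Theta = 1$. The plan is to check that under this choice, iteration (\ref{zolo1}-\ref{zolo2}) collapses to the diagonal Pad\'e iteration (\ref{pade}) and iteration (\ref{zolopd1}-\ref{zolopd2}) collapses to (\ref{padepd}), after which the identity $Y_k = \tfrac{1}{2}(X_k + X_k^*)$ follows instantly from the proposition.

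First I would verify that the auxiliary sequences $\{\Theta_k\}$ and $\{\ell_k\}$ remain pinned at their initial values $\Theta_0 = 0$ and $\ell_0 = 1$. By Remark~\ref{remark:theta0}, $r_{2n+1}(z;0) = z p_n(z^2)$, and since $p_n$ is the type-$(n,n)$ Pad\'e approximant of $z^{-1/2}$ at $z = 1$, one has $p_n(1) = 1$. Hence $r_{2n+1}(1;0) = 1$, which gives $\Theta_{k+1} = |\arg r_{2n+1}(e^{i \cdot 0};0)| = 0$ whenever $\Theta_k = 0$. Similarly, $\widehat{R}_{2n+1}(1;1) = 1 \cdot p_n(1) = 1$, so $\ell_{k+1} = 1$ whenever $\ell_k = 1$. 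Consequently, with these starting values iteration (\ref{zolo1}-\ref{zolo2}) becomes precisely $X_{k+1} = X_k p_n(X_k^2)$ with $X_0 = A$, namely (\ref{pade}); and iteration (\ref{zolopd1}-\ref{zolopd2}) becomes precisely $Y_{k+1} = Y_k p_n(Y_k^2)$ with $Y_0 = B$, namely (\ref{padepd}).

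Applying the preceding proposition with $\Theta = 0$ and $\ell = 1$ (admissible since its hypotheses allow $\Theta \in [0, \pi/2)$) then immediately delivers $Y_k = \tfrac{1}{2}(X_k + X_k^*)$ for every $k \ge 0$. There is no substantive obstacle here---the corollary is genuinely a specialization of the proposition---and the only point requiring a moment's thought is that the proposition's proof leans on Theorem~\ref{thm:composition}, whose applicability at $\Theta = 0$ is precisely what Remark~\ref{remark:theta0} guarantees.
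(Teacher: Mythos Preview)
Your proof is correct and follows exactly the approach intended in the paper: the corollary is stated immediately after the proposition with no separate proof, precisely because it is the specialization $\Theta=0$, $\ell=\cos\Theta=1$. Your verification that the auxiliary sequences stay pinned at $\Theta_k=0$ and $\ell_k=1$ (so that the iterations reduce to~(\ref{pade}) and~(\ref{padepd})) and your invocation of Remark~\ref{remark:theta0} to justify applying the proposition at $\Theta=0$ make the implicit reasoning explicit.
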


\subsection{Implementation}

To implement the $k$th step of the iteration~(\ref{zolo1}-\ref{zolo2}), one must compute products of unitary matrices of the form
\begin{equation} \label{XaX}
V_j  = (X_k^2 + a_j I) (I+a_j X_k^2)^{-1} = (X_k + a_j X_k^*) (X_k^*+a_j X_k)^{-1}, \quad j=1,2,\dots,n,
\end{equation}
where $X_k$ is unitary.  The following lemma describes a method for computing~(\ref{XaX}) that is guaranteed to produce a matrix that is unitary to machine precision.

\begin{lemma}
Let $B \in \mathbb{C}^{m \times m}$ be a nonsingular normal matrix.  Let $Q_1 R_1 = B$ and $Q_2 R_2 = B^*$ be the QR factorizations of $B$ and $B^*$, respectively.  Then
\[
B B^{-*} = Q_1 Q_2^*.
\]
\end{lemma}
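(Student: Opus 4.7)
\medskip

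\noindent\textbf{Proof plan.} My plan is to substitute the two QR factorizations into $BB^{-*}$ and reduce the claim to showing $R_1 = R_2$, then extract that identity from normality of $B$ together with uniqueness of the Cholesky factorization. I assume throughout the standard normalization that the triangular factors $R_1$ and $R_2$ have positive real diagonal entries, which makes the QR factorization of a nonsingular matrix unique.

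\medskip

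\noindent First I would write $B = Q_1 R_1$ and $B^* = Q_2 R_2$, from which $B^{-*} = R_2^{-1} Q_2^*$ and hence
\[
B B^{-*} = Q_1 R_1 R_2^{-1} Q_2^*.
\]
So the claim is equivalent to $R_1 R_2^{-1} = I$, i.e.\ to $R_1 = R_2$.

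\medskip

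\noindent Next I would compute $B^*B$ and $BB^*$ from the two factorizations. The identity $B = Q_1 R_1$ gives $B^*B = R_1^* Q_1^* Q_1 R_1 = R_1^* R_1$, while $B^* = Q_2 R_2$ (so $B = R_2^* Q_2^*$) gives $BB^* = R_2^* Q_2^* Q_2 R_2 = R_2^* R_2$. Normality of $B$ yields $R_1^* R_1 = R_2^* R_2$.

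\medskip

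\noindent Finally I would invoke uniqueness of the Cholesky factorization: since $R_1$ and $R_2$ are both upper triangular with positive diagonal entries, each of $R_1^* R_1$ and $R_2^* R_2$ is the unique Cholesky factorization of the same positive definite matrix, forcing $R_1 = R_2$. Substituting back gives $BB^{-*} = Q_1 Q_2^*$. The only subtle point is the normalization convention on the QR factors; without it the triangular factors are only unique up to a diagonal unitary and the identity must be read modulo absorbing such a factor into the $Q$'s, so I would state this convention explicitly at the outset.
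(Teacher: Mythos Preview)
Your argument is correct and essentially identical to the paper's: both reduce $BB^{-*}=Q_1 R_1 R_2^{-1} Q_2^*$ to $R_1=R_2$ via the observation that $R_1$ and $R_2$ are Cholesky factors of $B^*B=BB^*$. Your explicit mention of the positive-diagonal normalization needed for uniqueness is a detail the paper leaves implicit.
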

\begin{proof}
Since $R_1$ is the Cholesky factor of $B^*B$ and $R_2$ is the Cholesky factor of $BB^* = B^*B$, we have $R_1=R_2$.  Hence, $BB^{-*} = Q_1 R_1 R_2^{-1} Q_2^* = Q_1 Q_2^*$.
\end{proof}

Once~(\ref{XaX}) has been computed for each $j$, one must decide in what order to multiply the matrices $V_1, V_2, \dots, V_n$, and $X_k$.  Our numerical experience suggests that this decision has a strong influence on the backward stability of the algorithm.  We find that the choice
\begin{equation} \label{Xkp1}
X_{k+1} = \frac{1}{2}(X_k V_1 V_2 \cdots V_n + V_n V_{n-1} \cdots V_1 X_k)
\end{equation}
is preferable to, for instance, $X_{k+1} = X_k V_1 V_2 \cdots V_n$ or $X_{k+1} = V_n V_{n-1} \cdots V_1 X_k$.  This choice appears to guarantee that $\|X_k A - A X_k\|=O(u)$ for each $k$, which is essential for backward stability; see Lemma~\ref{lemma:backwardstability} for details.  A proof that $\|X_k A - A X_k\|=O(u)$ when~(\ref{Xkp1}) is used remains an open problem.

\paragraph{Termination}
We must also decide how to terminate the iteration.  
Here we suggest terminating slightly early and applying two post-processing steps---symmetrization followed by one step of the Newton-Schulz iteration~\cite[Equation 8.20]{higham2008functions} for the polar decomposition---to ensure that the computed matrix $\widehat{S} \approx \sign(A)$ is Hermitian and unitary to machine precision.  These post-processing steps have the following effect.
Let $\{\sigma_j \cos\theta_j + i\sin\theta_j\}_{j=1}^m$ be the eigenvalues of $X_k$, where $\sigma_j \in \{-1,1\}$ and $|\theta_j|<\pi/2$ for each $j$. Then 
\begin{equation} \label{symmetrize}
Y = \frac{1}{2}(X_k + X_k^*)
\end{equation}
has eigenvalues $\{\sigma_j \cos\theta_j \}_{j=1}^m$, and
\begin{equation} \label{newtonschulz}
Z = \frac{1}{2} Y(3I-Y^*Y) = \frac{1}{2} Y(3I-Y^2)
\end{equation}
has eigenvalues $\{ \frac{1}{2}\sigma_j \cos\theta_j (3 - \cos^2\theta_j) \}_{j=1}^m$.  For small $\theta_j$, we have
\[
\frac{1}{2}\sigma_j \cos\theta_j (3 - \cos^2\theta_j) = \sigma_j \left(1-\frac{3}{8}\theta_j^4\right) + O(\theta_j^6).
\]
This number will lie within a tolerance $\delta$ of $\pm 1$ if
\begin{equation} \label{thetaconverged}
\theta_j \lesssim \left( \frac{8\delta}{3} \right)^{1/4}.
\end{equation}

The above calculations suggest the following termination criterion.  Since the eigenvalues of $X_k-X_k^*$ are $\{2i\sin\theta_j\}_{j=1}^m \approx \{2i\theta_j\}_{j=1}^m$, we terminate the iteration and carry out the post-processing steps~(\ref{symmetrize}-\ref{newtonschulz}) as soon as
\[
\|X_k-X_k^*\| \le 2 \left( \frac{8\delta}{3} \right)^{1/4}.
\]
Note that since the Frobenius norm $\|\cdot\|_F$ is an upper bound for the $2$-norm $\|\cdot\|$, we may safely replace $\|X_k-X_k^*\|$ by $\|X_k-X_k^*\|_F$ in the criterion above.  If desired, a second symmetrization can be performed after the Newton-Schulz step.  This has virtually no effect on the eigenvalues' distance to $\pm 1$, but it may be desirable if an exactly Hermitian matrix is sought.

\paragraph{Spectral angle}
Let us also mention how to determine $\Theta$ so that $\Lambda(A) \subset \mathbb{S}_\Theta$.  We hereafter refer to the smallest such $\Theta$ as the \emph{spectral angle} of $A$, denoted $\Theta(A)$.  A simple heuristic is to estimate the eigenvalues $\lambda_+$ and $\lambda_-$ of $A$ that lie closest to $i$ and $-i$, respectively.  Then one can set
\[
\Theta = \max\{ \pi/2 - |\arg(i\lambda_-)|, |\arg(i\lambda_+)| - \pi/2 \}.
\]
In practice, it is not necessary to determine the spectral angle of $A$ precisely.
Our experience suggests that underestimates and overestimates of $\Theta$ can be used without significant harm, unless $\Theta$ is very close to $\pi/2$.

\paragraph{Spectral angles close to $\pi/2$}
There are a few delicate numerical issues that arise when the spectral angle of $A$ is close to $\pi/2$.  First, as noted in~\cite[Section 4.3]{nakatsukasa2016computing}, the built-in MATLAB functions \verb$ellipj$ and \verb$ellipke$ cannot be used to reliably compute $\sn(\cdot,\ell')$, $\cn(\cdot,\ell')$, $\dn(\cdot,\ell')$, and $K(\ell')$ when $\Theta = \arccos\ell$ is close to $\pi/2$.  Instead, the code described in~\cite[Section 4.3]{nakatsukasa2016computing} is preferred.  In addition, the lowest-order iteration ($n=1$) appears to be more reliable than the higher-order iterations when $\Theta>\pi/2-u^{1/2}$, so we advocate using the lowest-order iteration until $\Theta_k$ falls below $\pi/2-u^{1/2}$ (recall that $u=2^{-53}$ denotes the unit roundoff).  Typically this takes two or fewer iterations, after which one can switch to a higher-order iteration if desired.

To implement the lowest-order iteration ($n=1$) when $\Theta>\pi/2-u^{1/2}$, we have found the following heuristic to be useful for ensuring rapid convergence.  If, at the $k$th iteration, $\Theta_k$ lies above $\pi/2-u^{1/2}$, we compute $\Theta_{k+1}$ as $\Theta_{k+1}=\Theta(X_{k+1})$ (the spectral angle of $X_{k+1}$) rather than via~(\ref{zolo2}).  This tends to speed up the iteration.  
To improve stability, we have also found it prudent to replace $\Theta_k$ by $\pi/2-10u$ if $\Theta_k > \pi/2-10u$.

A summary of our proposed algorithm for computing the unitary sign decomposition is presented in Algorithm~\ref{alg:zolosign}.

\begin{algorithm}
\caption{Order-$(2n+1)$ iteration for the unitary sign decomposition\newline
\textit{Inputs}: Unitary matrix $A \in \mathbb{C}^{m \times m}$, tolerance $\delta>0$, degree $n \in \mathbb{N}$\newline
\textit{Outputs}: Matrices $S,N \in \mathbb{C}^{m \times m}$ satisfying~(\ref{unitarysign})}
\label{alg:zolosign}
\begin{algorithmic}[1]
\STATE{$\Theta_0 = \min\{\Theta(A),\pi/2-10u$\}} \label{line:theta0}
\STATE{$X_0=A$, $n_0=n$, $k=0$}
\WHILE{$\|X_k-X_k^*\|_F > 2(8\delta/3)^{1/4}$}
\LINEIFELSE{$\Theta_k>\pi/2-u^{1/2}$}{$n=1$}{$n=n_0$}
\STATE{$Y=X_k$, $Z=X_k$}
\FOR{$j=1$ \TO $n$}
\STATE{$Q_1 R_1 = X_k+a_j(\Theta_k) X_k^*$ (QR factorization)}
\STATE{$Q_2 R_2 = X_k^*+a_j(\Theta_k) X_k$ (QR factorization)}
\STATE{$Y= Y Q_1 Q_2^* $}
\STATE{$Z = Q_1 Q_2^* Z$}
\ENDFOR
\STATE{$X_{k+1} = \frac{1}{2}(Y+Z)$}
\IF{$\Theta_k>\pi/2-u^{1/2}$}
\STATE{$\Theta_{k+1} = \min\{\Theta(X_{k+1}),\pi/2-10u\}$}
\ELSE
\STATE{$\Theta_{k+1} = |\arg r_{2n+1}(e^{i\Theta_k};\Theta_k)|$}
\ENDIF
\STATE{$k = k+1$}
\ENDWHILE
\STATE{$S = (X_k+X_k^*)/2$}
\STATE{$S = S(3I-S^2)/2$}
\STATE{$S = (S+S^*)/2$}
\STATE{$N = S A$}
\RETURN $S$, $N$
\end{algorithmic} 
\end{algorithm}

\subsection{Backward Stability}

We now discuss how some of the choices made above are inspired by backward stability considerations.

We first address a remark that was made in the footnote of this paper's introduction concerning the list of backward errors~(\ref{backwarderrors}).  At first glance, this list may appear to be incomplete because the norm of $\widehat{N}\widehat{S}-\widehat{S}\widehat{N}$ is absent.  
The following lemma shows that if $\widehat{S}$ and $\widehat{N}$ are well-conditioned matrices and $\|\widehat{N}^2-A^2\|$, $\|A-\widehat{S}\widehat{N}\|$, and $\|\widehat{S}^2-I\|$ are small, then $\|\widehat{N}\widehat{S}-\widehat{S}\widehat{N}\|$ is automatically small as well.

\begin{lemma} \label{lemma:SNcommute}
Let $A \in \mathbb{C}^{m \times m}$ be a unitary matrix.  For any invertible matrices $\widehat{S},\widehat{N} \in \mathbb{C}^{m \times m}$, we have
\begin{equation*}
\|\widehat{N}\widehat{S}-\widehat{S}\widehat{N}\| \le \left( \|\widehat{N}^2-A^2\| + (1 + \|\widehat{S}\|\|\widehat{N}\|) \|A-\widehat{S}\widehat{N}\| + \|\widehat{N}\|^2 \|\widehat{S}^2-I\| \right) \|\widehat{N}^{-1}\| \|\widehat{S}^{-1}\|.
\end{equation*}
\end{lemma}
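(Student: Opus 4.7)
The plan is to reduce the commutator $\widehat{N}\widehat{S}-\widehat{S}\widehat{N}$ to a difference that vanishes exactly when the three identities $\widehat{S}\widehat{N}=A$, $\widehat{S}^2=I$, and $\widehat{N}^2=A^2$ hold, then bound each piece by its corresponding backward error.

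The key algebraic identity is
\[
\widehat{S}(\widehat{N}\widehat{S}-\widehat{S}\widehat{N})\widehat{N} = (\widehat{S}\widehat{N})^2 - \widehat{S}^2\widehat{N}^2.
\]
This is just expansion, but it is useful because the right-hand side is a difference that would be zero in exact arithmetic: $(\widehat{S}\widehat{N})^2 \approx A^2$ and $\widehat{S}^2\widehat{N}^2 \approx \widehat{N}^2 \approx A^2$. Using invertibility of $\widehat{S}$ and $\widehat{N}$, I rearrange to
\[
\widehat{N}\widehat{S}-\widehat{S}\widehat{N} = \widehat{S}^{-1}\bigl((\widehat{S}\widehat{N})^2 - \widehat{S}^2\widehat{N}^2\bigr)\widehat{N}^{-1},
\]
so after applying submultiplicativity it remains to bound $\|(\widehat{S}\widehat{N})^2-\widehat{S}^2\widehat{N}^2\|$.

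I would handle that norm by inserting $\pm A^2$ and splitting into three pieces:
\[
(\widehat{S}\widehat{N})^2 - \widehat{S}^2\widehat{N}^2 = \bigl((\widehat{S}\widehat{N})^2 - A^2\bigr) + \bigl(A^2 - \widehat{N}^2\bigr) + \bigl(\widehat{N}^2 - \widehat{S}^2\widehat{N}^2\bigr).
\]
The middle term is exactly $-(\widehat{N}^2-A^2)$. The third term factors as $(I-\widehat{S}^2)\widehat{N}^2$, whose norm is at most $\|\widehat{S}^2-I\|\,\|\widehat{N}\|^2$. For the first term, I set $E=\widehat{S}\widehat{N}-A$ and write $(\widehat{S}\widehat{N})^2-A^2 = (A+E)^2-A^2 = AE + E(A+E) = AE + E\widehat{S}\widehat{N}$, so that its norm is at most $(\|A\|+\|\widehat{S}\widehat{N}\|)\|E\| \le (1+\|\widehat{S}\|\,\|\widehat{N}\|)\|A-\widehat{S}\widehat{N}\|$, where I use $\|A\|=1$ because $A$ is unitary. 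Assembling these three bounds via the triangle inequality and multiplying by $\|\widehat{S}^{-1}\|\,\|\widehat{N}^{-1}\|$ gives the stated estimate.

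There is no real obstacle here; the only subtlety is choosing the factoring $AE + E\widehat{S}\widehat{N}$ (rather than $AE+EA+E^2$) so that the quadratic-in-$E$ term is absorbed into the factor $1+\|\widehat{S}\|\,\|\widehat{N}\|$, which keeps the bound linear in each backward error. The use of $\|A\|=1$ is the only place the unitarity of $A$ enters the argument.
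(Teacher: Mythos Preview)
Your proof is correct and is essentially the paper's argument. The only cosmetic difference is that the paper multiplies the commutator on the right by $\widehat{S}\widehat{N}$, obtaining the identity
\[
(\widehat{N}\widehat{S}-\widehat{S}\widehat{N})\widehat{S}\widehat{N} = \widehat{N}^2-A^2 + A(A-\widehat{S}\widehat{N}) + (A-\widehat{S}\widehat{N})\widehat{S}\widehat{N} + \widehat{N}(\widehat{S}^2-I)\widehat{N},
\]
whereas you conjugate by $\widehat{S}$ on the left and $\widehat{N}$ on the right; the resulting three-term split and the ensuing norm bounds are the same.
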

\begin{proof}
This follows from the identity
\[
(\widehat{N}\widehat{S}-\widehat{S}\widehat{N})\widehat{S}\widehat{N} = \widehat{N}^2-A^2 + A(A-\widehat{S}\widehat{N}) + (A-\widehat{S}\widehat{N})\widehat{S}\widehat{N} + \widehat{N}(\widehat{S}^2-I)\widehat{N}.
\]
\end{proof}

The next lemma shows that in order to achieve backward stability, it is prudent to compute a Hermitian matrix $\widehat{S}$ such that $\|\widehat{S}^2-I\|$ and $\|A\widehat{S}-\widehat{S}A\|$ are small, and then set $\widehat{N} = \widehat{S}A$.  This highlights the importance of ensuring the smallness of $\|AX_k-X_kA\|$ in Algorithm~\ref{alg:zolosign}.

\begin{lemma} \label{lemma:backwardstability}
Let $A \in \mathbb{C}^{m \times m}$ be a unitary matrix, let $\widehat{S}$ be an invertible Hermitian matrix, and  let $\widehat{N} = \widehat{S}A$.  Then
\begin{align}
\|\widehat{N}^*\widehat{N}-I\| &\le \|\widehat{S}^2-I\|, \label{ineq1} \\
\|A-\widehat{S}\widehat{N}\| &\le \|\widehat{S}^2-I\|, \label{ineq2} \\
\|\widehat{N}^2-A^2\| &\le \|\widehat{S}\| \|A\widehat{S}-\widehat{S}A\| + \|\widehat{S}^2-I\|, \label{ineq3} \\
\|\widehat{N}\widehat{S}-\widehat{S}\widehat{N}\| &\le \|\widehat{S}\| \|A\widehat{S}-\widehat{S}A\|. \label{ineq4}
\end{align}
\end{lemma}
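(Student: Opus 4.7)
The plan is to prove all four inequalities by direct algebraic manipulation, substituting $\widehat{N} = \widehat{S}A$ and exploiting the two structural hypotheses: $\widehat{S} = \widehat{S}^*$ and $A^*A = AA^* = I$. Each inequality reduces to rewriting the matrix inside the norm as a product and then invoking submultiplicativity together with the unitary invariance $\|XA\| = \|AX\| = \|X\|$.

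For inequality~(\ref{ineq1}), I would compute $\widehat{N}^*\widehat{N} = A^*\widehat{S}^*\widehat{S}A = A^*\widehat{S}^2 A$, so that $\widehat{N}^*\widehat{N} - I = A^*(\widehat{S}^2-I)A$; the bound then follows from $\|A\|=\|A^*\|=1$. For inequality~(\ref{ineq2}), the identity $\widehat{S}\widehat{N} = \widehat{S}^2 A$ gives $A - \widehat{S}\widehat{N} = (I-\widehat{S}^2)A$, and again the unitarity of $A$ yields the result.

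Inequality~(\ref{ineq4}) is the simplest: expand $\widehat{N}\widehat{S} - \widehat{S}\widehat{N} = \widehat{S}A\widehat{S} - \widehat{S}^2 A = \widehat{S}(A\widehat{S} - \widehat{S}A)$ and apply submultiplicativity. Inequality~(\ref{ineq3}) is essentially a combination of~(\ref{ineq2}) and~(\ref{ineq4}): I would write
\[
\widehat{N}^2 - A^2 = \widehat{S}A\widehat{S}A - A^2,
\]
and insert $\pm\,\widehat{S}^2 A^2$ to split this as
\[
\widehat{N}^2 - A^2 = \widehat{S}(A\widehat{S}-\widehat{S}A)A + (\widehat{S}^2-I)A^2.
\]
The triangle inequality together with $\|A\|=\|A^2\|=1$ then produces the stated bound.

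There is no real obstacle here; the only point requiring a bit of care is choosing the right additive decomposition in step~(\ref{ineq3}) so that the cross term produces exactly the commutator $A\widehat{S}-\widehat{S}A$ and the remaining term is the defect $\widehat{S}^2-I$. Everything else is immediate from the hypotheses that $A$ is unitary and $\widehat{S}$ is Hermitian.
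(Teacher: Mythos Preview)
Your proposal is correct and matches the paper's proof essentially line for line: the same four identities $\widehat{N}^*\widehat{N}-I = A^*(\widehat{S}^2-I)A$, $A-\widehat{S}\widehat{N} = (I-\widehat{S}^2)A$, $\widehat{N}^2-A^2 = \widehat{S}(A\widehat{S}-\widehat{S}A)A + (\widehat{S}^2-I)A^2$, and $\widehat{N}\widehat{S}-\widehat{S}\widehat{N} = \widehat{S}(A\widehat{S}-\widehat{S}A)$ are used, followed by submultiplicativity and $\|A\|=1$.
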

\begin{proof}
Since $A^*A=I$, $\widehat{N}=\widehat{S}A$, and $\widehat{S}=\widehat{S}^*$, we have
\[
\widehat{N}^*\widehat{N}-I = A^*\widehat{S}^2 A - I = A^* (\widehat{S}^2-I)A.
\]
Taking the norm of both sides proves~(\ref{ineq1}).  Similarly, the equalities
\begin{align*}
A-\widehat{S}\widehat{N} &= (I-\widehat{S}^2)A, \\
\widehat{N}^2-A^2 &= \widehat{S}A\widehat{S}A - A^2 = \widehat{S}(A\widehat{S}-\widehat{S}A)A + (\widehat{S}^2-I)A^2, \\
\widehat{N}\widehat{S}-\widehat{S}\widehat{N} &= \widehat{S}(A\widehat{S}-\widehat{S}A)
\end{align*}
yield~(\ref{ineq2}-\ref{ineq4}).
\end{proof}

\section{A Spectral Divide-and-Conquer Algorithm for the Unitary Eigendecomposition} \label{sec:eig}

The iteration we have proposed for computing the unitary sign decomposition can be used to construct a spectral divide-and-conquer algorithm for the unitary eigendecomposition, following~\cite{nakatsukasa2013stable,nakatsukasa2016computing}.  The idea is as follows.  Given a unitary matrix $A \in \mathbb{C}^{m \times m}$, we scale $A$ by a complex number $e^{i\phi}$ so that roughly half (say, $m_1$) of the eigenvalues of $e^{i\phi}A$ lie in the right half of the complex plane, and roughly half (say, $m_2$) lie in the left half of complex plane.  We then compute $S = \sign(e^{i\phi}A)$ using Algorithm~\ref{alg:zolosign}.  The matrix $P=(I+S)/2$ is a spectral projector onto the invariant subspace $\mathcal{V}_+$ of $e^{i\phi}A$ associated with the eigenvalues of $e^{i\phi}A$ having positive real part.   Using subspace iteration, we can compute orthonormal bases $U_1 \in \mathbb{C}^{m \times m_1}$ and $U_2 \in \mathbb{C}^{m \times m_2}$ (where $m_1+m_2=m$) for $\mathcal{V}_+$ and its orthogonal complement.  Then
\[
\begin{pmatrix} U_1^* \\  U_2^* \end{pmatrix} A \begin{pmatrix} U_1 & U_2 \end{pmatrix} 
=
\begin{pmatrix} A_1 & 0 \\ 0 & A_2 \end{pmatrix}
\]
is block diagonal, so we can recurse to find eigendecompositions $A_1 = V_1 \Lambda_1 V_1^*$ and $A_2 = V_2 \Lambda_2 V_2^*$.  The eigendecomposition of $A$ is then $A = V\Lambda V^*$, where
\[
V = \begin{pmatrix} U_1 V_1 & U_2 V_2 \end{pmatrix}
\]
and
\[
\Lambda = \begin{pmatrix} \Lambda_1 & 0 \\ 0 & \Lambda_2 \end{pmatrix}.
\]

Since every eigenvalue of $P$ is either $0$ and $1$, subspace iteration with $P$ typically converges in one iteration, or, in rare cases, two.  To choose the scalar $e^{i\phi}$, a simple heuristic is to compute the median $\mu$ of the arguments of the diagonal entries of $A$ and set $\phi=\pi/2-\mu$.  When $A$ is nearly diagonal, this has the effect of centering the eigenvalues around $i$.

A summary of the algorithm just described is presented in Algorithm~\ref{alg:eig}.

\begin{algorithm}
\caption{Divide-and-conquer algorithm for the unitary eigendecomposition\newline
\textit{Inputs}: Unitary matrix $A \in \mathbb{C}^{m \times m}$ \newline
\textit{Outputs}: Matrices $V,\Lambda \in \mathbb{C}^{m \times m}$ satisfying $V\Lambda V^* = A$, $V^*V = I$, and $\Lambda$ diagonal}
\label{alg:eig}
\begin{algorithmic}[1]
\STATE{$\phi = \frac{\pi}{2} - \operatorname{median} \{\arg A_{11},\dots,\arg A_{mm}\}$}
\STATE{$S = \sign(e^{i\phi} A)$} \label{line:sign}
\STATE{$P = (I+S)/2$}
\STATE{Use subspace iteration to compute orthonormal bases $U_1 \in \mathbb{C}^{m \times m_1}$ and $U_2 \in \mathbb{C}^{m \times m_2}$ for the 0- and 1-eigenspaces of $P$.}
\STATE{$A_1 = U_1^* A U_1$, $A_2 = U_2^* A U_2$}
\STATE{Recurse to find eigendecompositions $V_1 \Lambda_1 V_1^* = A_1$ and $V_2 \Lambda_2 V_2^* = A_2$.}
\STATE{$V = \begin{pmatrix} U_1 V_1 & U_2 V_2 \end{pmatrix}$}
\STATE{$\Lambda = \begin{pmatrix} \Lambda_1 & 0 \\ 0 & \Lambda_2 \end{pmatrix}$}
\RETURN $V$, $\Lambda$
\end{algorithmic} 
\end{algorithm}

\section{Numerical Examples} \label{sec:numerical}

In this section, we study the iteration~(\ref{zolo1}-\ref{zolo2}) numerically, and we test Algorithms~\ref{alg:zolosign} and~\ref{alg:eig} on a collection of unitary matrices.

\subsection{Scalar Iteration}

\begin{table}
\centering
\begin{tabularx}{\linewidth}{ Y*{11}{Y} }
 & \multicolumn{11}{c}{$\frac{\pi}{2}-\Theta$} \\
$n$ & 1.5 & 1 & 0.5 & $10^{-2}$ & $10^{-4}$ & $10^{-6}$ & $10^{-8}$ & $10^{-10}$ & $10^{-12}$ & $10^{-14}$ & $10^{-16}$ \\
\cmidrule(lr){1-1}
\cmidrule(lr){2-12}
 1 & 1 & 2 & 2 & 3 & 4 & 4 & 5 & 5 & 5 & 5 & 5 \\
 2 & 1 & 2 & 2 & 3 & 3 & 3 & 3 & 3 & 3 & 4 & 4 \\
 3 & 1 & 1 & 2 & 2 & 2 & 3 & 3 & 3 & 3 & 3 & 3 \\
 4 & 1 & 1 & 1 & 2 & 2 & 2 & 3 & 3 & 3 & 3 & 3 \\
 5 & 1 & 1 & 1 & 2 & 2 & 2 & 2 & 2 & 3 & 3 & 3 \\
 6 & 1 & 1 & 1 & 2 & 2 & 2 & 2 & 2 & 2 & 2 & 2 \\
 7 & 1 & 1 & 1 & 2 & 2 & 2 & 2 & 2 & 2 & 2 & 2 \\
 8 & 1 & 1 & 1 & 2 & 2 & 2 & 2 & 2 & 2 & 2 & 2 \\
\end{tabularx}
\caption{Smallest integer $k$ for which $4\rho(\Theta)^{-(2n+1)^k} \le (8\delta/3)^{1/4}$, where $\delta = 10^{-16}$, for various values of $n$ and $\Theta$.}
\label{tab:iter}
\end{table}

\begin{table}
\centering
\begin{tabularx}{\linewidth}{ Y*{11}{Y} }
 & \multicolumn{11}{c}{$\frac{\pi}{2}-\Theta$} \\
$n$ & 1.5 & 1 & 0.5 & $10^{-2}$ & $10^{-4}$ & $10^{-6}$ & $10^{-8}$ & $10^{-10}$ & $10^{-12}$ & $10^{-14}$ & $10^{-16}$ \\
\cmidrule(lr){1-1}
\cmidrule(lr){2-12}
 1 & 1 & 2 & 3 & 7 & 11 & 15 & 19 & 24 & 28 & 32 & 37 \\
 2 & 1 & 2 & 2 & 5 & 8 & 10 & 13 & 16 & 19 & 22 & 25 \\
 3 & 1 & 2 & 2 & 4 & 6 & 9 & 11 & 13 & 16 & 18 & 21 \\
 4 & 1 & 1 & 2 & 4 & 6 & 8 & 10 & 12 & 14 & 16 & 19 \\
 5 & 1 & 1 & 2 & 3 & 5 & 7 & 9 & 11 & 13 & 15 & 17 \\
 6 & 1 & 1 & 2 & 3 & 5 & 7 & 9 & 10 & 12 & 14 & 16 \\
 7 & 1 & 1 & 2 & 3 & 5 & 6 & 8 & 10 & 12 & 13 & 15 \\
 8 & 1 & 1 & 2 & 3 & 5 & 6 & 8 & 9 & 11 & 13 & 14 
\end{tabularx}
\caption{Smallest integer $k$ for which $|r_{(2n+1)^k}(e^{i\Theta};0)-1| \le (8\delta/3)^{1/4}$, where $\delta = 10^{-16}$, for various values of $n$ and $\Theta$.}
\label{tab:iterpade}
\end{table}

To understand how rapidly the iteration~(\ref{zolo1}-\ref{zolo2}) can be expected to converge, let us study the upper bound~(\ref{errorbound}).  Table~\ref{tab:iter} reports the smallest integer $k$ for which $4\rho(\Theta)^{-(2n+1)^k}$ falls below the number $(8\delta/3)^{1/4}$ appearing in the convergence criterion~(\ref{thetaconverged}).  Here, we took $\delta = 10^{-16}$ and considered various choices of $n$ and $\Theta$.  The integer $k$ so computed provides an estimate for the number of iterations one can expect~(\ref{zolo1}-\ref{zolo2}) to take to converge to $\sign(A)$ if $A$ has spectrum contained in $\mathbb{S}_\Theta$.

For comparison, we computed the number of iterations needed for the scalar Pad\'e iteration
\[
z_{k+1} = r_{2n+1}(z_k;0) = z_k p_n(z_k^2)
\]
to converge to $\sign z_0$, starting from $z_0 = e^{i\Theta}$.  The results, reported in Table~\ref{tab:iterpade}, show that the Pad\'e iterations take significantly longer to converge if $\Theta$ is close to $\pi/2$.  This suggests the matrix Pad\'e iteration~(\ref{pade}) will require a large number of iterations to converge to $\sign(A)$ if the spectral angle $\Theta(A)$ is close to $\pi/2$.

\subsection{Matrix Iteration}

To test Algorithm~\ref{alg:zolosign}, we computed the sign decomposition of four unitary matrices:
\begin{enumerate}
\item \label{mat1} A matrix sampled randomly from the Haar measure on the $m \times m$ unitary group.
\item  \label{mat2} \verb$A = gallery('orthog',m,3)$.  This is the $m$-point discrete Fourier transform matrix with entries $A_{jk} = e^{2\pi i (j-1)(k-1)/m}  / \sqrt{m}$.  Its eigenvalues are $1,-1,i,-i$.  The spectrum of the floating point representation of $A$ therefore includes $O(u)$-perturbations of $\pm i$, posing a challenge to numerical algorithms for the unitary sign decomposition.
\item \label{mat3} \verb$A = circshift(eye(m),1)$.  This is a permutation matrix with eigenvalues $e^{2\pi i j/m}$, $m=1,2,\dots,m$.  For even $m$, the spectrum of $A$ includes $\pm i$.  The same is true of the floating point representation of $A$, since the entries of $A$ are integers.
\item \label{mat4} \verb$A = gallery('orthog',m,-2)$ (with columns normalized).  The entries of $A$ (prior to normalizing columns) are $A_{jk} = \cos((k-1/2)(j-1)\pi/m)$.  The spectrum of $A$ is clustered near $\pm 1$, making its sign decomposition somewhat easy to compute iteratively.
\end{enumerate}

\medskip

In our numerical experiment, we used $m=100$.  The computed spectral angles for the matrices above were $\pi/2-\Theta(A) = 0.026$, $4.4 \times 10^{-16}$, $0$, and $0.95$, respectively.

On each of the matrices above, we compared 10 algorithms: 
\begin{itemize}
\item Algorithm~\ref{alg:zolosign} with $n=1,4,8$.
\item The diagonal Pad\'e iteration~(\ref{pade}) with $n=1,4,8$.  We implemented this by running Algorithm~\ref{alg:zolosign} with line~\ref{line:theta0} replaced by $\Theta_0=0$.
\item Three algorithms that compute the unitary factor $S$ in the polar decomposition of $B=(A+A^*)/2$.  The first uses the Newton iteration with $1,\infty$-norm scaling, as described in~\cite[Section 8.6]{higham2008functions} and implemented in~\cite{Higham:MCT}.  The second uses the Zolo-pd algorithm from~\cite{nakatsukasa2016computing}.  The third computes $S$ as $S=UV^*$, where $B=U\Sigma V^*$ is the SVD of $B$.  In all three cases, we applied post-processing to $S$ ($S=(S+S^*)/2$, followed by $S=S(3I-S^2)/2$, followed by $S=(S+S^*)/2$) and set $N=SA$.
\item A direct method: computing the eigendecomposition $A=V\Lambda V^*$ of $A$ and setting $S = V\sign(\Lambda)V^*$.  We computed the eigendecomposition by using the MATLAB command \verb$schur(A,'complex')$ and setting the off-diagonal entries of the triangular factor to zero.  We applied post-processing to $S$ ($S=S(3I-S^2)/2$ followed by $S=(S+S^*)/2$) and set $N=SA$.
\end{itemize}

\medskip

The results of the tests are reported in Table~\ref{tab:sign}.  All of the algorithms under consideration performed in a backward stable way on the first and fourth matrices.  On the second and third matrices (\verb$gallery('orthog',m,3)$ and \verb$circshift(eye(m),1)$), only the direct method and the structure-preserving iterations (Algorithm~\ref{alg:zolosign} and the Pad\'e iteration~(\ref{pade})) exhibited backward stability.  Among the structure-preserving iterations, Algorithm~\ref{alg:zolosign} consistently converged more quickly than the Pad\'e iteration~(\ref{pade}) for each degree $n$.  The reduction in iteration count was particularly noticeable for \verb$gallery('orthog',m,3)$ and \verb$circshift(eye(m),1)$.

\begin{table}[t]
\centering
\pgfplotstabletypeset[
every head row/.style={
after row=\midrule},
every nth row={10}{before row=\midrule},
columns={leftcol2,0,1,2,3,4,5},
create on use/leftcol2/.style={create col/set list={
Alg.~\ref{alg:zolosign} ($n=1$),Alg.~\ref{alg:zolosign} ($n=4$),Alg.~\ref{alg:zolosign} ($n=8$), Pad\'e ($n=1$), Pad\'e ($n=4$), Pad\'e ($n=8$), Polar (Newton), Polar (Zolo-pd), Polar (SVD), Direct,
Alg.~\ref{alg:zolosign} ($n=1$),Alg.~\ref{alg:zolosign} ($n=4$),Alg.~\ref{alg:zolosign} ($n=8$), Pad\'e ($n=1$), Pad\'e ($n=4$), Pad\'e ($n=8$), Polar (Newton), Polar (Zolo-pd), Polar (SVD), Direct,
Alg.~\ref{alg:zolosign} ($n=1$),Alg.~\ref{alg:zolosign} ($n=4$),Alg.~\ref{alg:zolosign} ($n=8$), Pad\'e ($n=1$), Pad\'e ($n=4$), Pad\'e ($n=8$), Polar (Newton), Polar (Zolo-pd), Polar (SVD), Direct,
Alg.~\ref{alg:zolosign} ($n=1$),Alg.~\ref{alg:zolosign} ($n=4$),Alg.~\ref{alg:zolosign} ($n=8$), Pad\'e ($n=1$), Pad\'e ($n=4$), Pad\'e ($n=8$), Polar (Newton), Polar (Zolo-pd), Polar (SVD), Direct}},
columns/leftcol/.style={string type,column type/.add={}{},column name={$\frac{\pi}{2}-\Theta(A)$}},
columns/leftcol/.style={string type,column type/.add={}{},column name={$\pi/2-\Theta(A)$}},
columns/leftcol2/.style={string type,column type/.add={}{},column name={Algorithm}},
columns/0/.style={column type/.add={}{},column name={$k$}}, 
columns/1/.style={sci,sci e,sci zerofill,precision=1,column type/.add={}{},column name={$\|A-\widehat{S}\widehat{N}\|$}}, 
columns/2/.style={sci,sci e,sci zerofill,precision=1,column type/.add={}{},column name={$\|\widehat{S}^2-I\|$}}, 
columns/3/.style={sci,sci e,sci zerofill,precision=1,column type/.add={}{},column name={$\|\widehat{N}^*\widehat{N}-I\|$}}, 
columns/4/.style={sci,sci e,sci zerofill,precision=1,column type/.add={}{},column name={$\|\widehat{N}^2-A^2\|$}}, 
columns/5/.style={sci,sci e,sci zerofill,precision=1,column type/.add={}{},column name={$\mu(\widehat{N})$}}
]
{sign.dat}
\caption{Performance of algorithms for computing the unitary sign decomposition of the matrices~\ref{mat1}-\ref{mat4}.  The table reports the iteration count $k$ and backward errors $\|A-\widehat{S}\widehat{N}\|$, $\|\widehat{S}^2-I\|$, $\|\widehat{N}^*\widehat{N}-I\|$, $\|\widehat{N}^2-A^2\|$, $\mu(\widehat{N})=\max\{0,-\min_{\lambda \in \Lambda(\widehat{N})} \Re\lambda\}$ for each algorithm.}
\label{tab:sign}
\end{table}

\subsection{Unitary eigendecomposition}

Next, we tested our spectral divide-and-conquer algorithm~\ref{alg:eig} on the same four matrices.  We implemented line~\ref{line:sign} of Algorithm~\ref{alg:eig} in nine different ways, namely, by using the nine indirect methods considered in the previous experiment.  We compared the results with the following direct method: \verb$[V,Lambda]=schur(A,'complex'); Lambda = diag(diag(Lambda))$.  The results are reported in Table~\ref{tab:eig}.  

All of the algorithms under consideration performed in a backward stable way on the first, second, and fourth matrices.  On the third matrix \verb$circshift(eye(m),1)$, the algorithms that used Zolo-pd and the SVD did not.  Curiously, the algorithm that used the Newton iteration succeeded, but this is an anomaly.  Changing \verb$circshift(eye(m),1)$ to \verb$circshift(eye(m),1)+eps*randn(m)$ leads to a backward error $\|A-\widehat{V}\widehat{\Lambda}\widehat{V}^*\|$ close to 0.1 for the Newton-based algorithm, and it has a negligible effect on the other algorithms' backward errors.

\begin{table}[t]
\hspace{-0.25in}
\pgfplotstabletypeset[
every head row/.style={after row=\midrule},
every nth row={10}{before row=\midrule},
columns={leftcol2,0,1},
create on use/leftcol2/.style={create col/set list={
Alg.~\ref{alg:zolosign} ($n=1$),Alg.~\ref{alg:zolosign} ($n=4$),Alg.~\ref{alg:zolosign} ($n=8$), Pad\'e ($n=1$), Pad\'e ($n=4$), Pad\'e ($n=8$), Polar (Newton), Polar (Zolo-pd), Polar (SVD), Direct,
Alg.~\ref{alg:zolosign} ($n=1$),Alg.~\ref{alg:zolosign} ($n=4$),Alg.~\ref{alg:zolosign} ($n=8$), Pad\'e ($n=1$), Pad\'e ($n=4$), Pad\'e ($n=8$), Polar (Newton), Polar (Zolo-pd), Polar (SVD), Direct,
Alg.~\ref{alg:zolosign} ($n=1$),Alg.~\ref{alg:zolosign} ($n=4$),Alg.~\ref{alg:zolosign} ($n=8$), Pad\'e ($n=1$), Pad\'e ($n=4$), Pad\'e ($n=8$), Polar (Newton), Polar (Zolo-pd), Polar (SVD), Direct,
Alg.~\ref{alg:zolosign} ($n=1$),Alg.~\ref{alg:zolosign} ($n=4$),Alg.~\ref{alg:zolosign} ($n=8$), Pad\'e ($n=1$), Pad\'e ($n=4$), Pad\'e ($n=8$), Polar (Newton), Polar (Zolo-pd), Polar (SVD), Direct}},
columns/leftcol/.style={string type,column type/.add={}{},column name={$\frac{\pi}{2}-\Theta(A)$}},
columns/leftcol2/.style={string type,column type/.add={}{},column name={Algorithm}},
columns/0/.style={sci,sci e,sci zerofill,precision=1,column type/.add={}{},column name={$\|A-\widehat{V}\widehat{\Lambda}\widehat{V}^*\|$}}, 
columns/1/.style={sci,sci e,sci zerofill,precision=1,column type/.add={}{},column name={$\|\widehat{V}^*\widehat{V}-I\|$}}
]
{eig12.dat}
\hspace{0.1in}
\pgfplotstabletypeset[
every head row/.style={
after row=\midrule},
every nth row={10}{before row=\midrule},
columns={leftcol2,0,1},
create on use/leftcol2/.style={create col/set list={
Alg.~\ref{alg:zolosign} ($n=1$),Alg.~\ref{alg:zolosign} ($n=4$),Alg.~\ref{alg:zolosign} ($n=8$), Pad\'e ($n=1$), Pad\'e ($n=4$), Pad\'e ($n=8$), Polar (Newton), Polar (Zolo-pd), Polar (SVD), Direct,
Alg.~\ref{alg:zolosign} ($n=1$),Alg.~\ref{alg:zolosign} ($n=4$),Alg.~\ref{alg:zolosign} ($n=8$), Pad\'e ($n=1$), Pad\'e ($n=4$), Pad\'e ($n=8$), Polar (Newton), Polar (Zolo-pd), Polar (SVD), Direct,
Alg.~\ref{alg:zolosign} ($n=1$),Alg.~\ref{alg:zolosign} ($n=4$),Alg.~\ref{alg:zolosign} ($n=8$), Pad\'e ($n=1$), Pad\'e ($n=4$), Pad\'e ($n=8$), Polar (Newton), Polar (Zolo-pd), Polar (SVD), Direct,
Alg.~\ref{alg:zolosign} ($n=1$),Alg.~\ref{alg:zolosign} ($n=4$),Alg.~\ref{alg:zolosign} ($n=8$), Pad\'e ($n=1$), Pad\'e ($n=4$), Pad\'e ($n=8$), Polar (Newton), Polar (Zolo-pd), Polar (SVD), Direct}},
columns/leftcol/.style={string type,column type/.add={}{},column name={$\frac{\pi}{2}-\Theta(A)$}},
columns/leftcol2/.style={string type,column type/.add={}{},column name={Algorithm}},
columns/0/.style={sci,sci e,sci zerofill,precision=1,column type/.add={}{},column name={$\|A-\widehat{V}\widehat{\Lambda}\widehat{V}^*\|$}}, 
columns/1/.style={sci,sci e,sci zerofill,precision=1,column type/.add={}{},column name={$\|\widehat{V}^*\widehat{V}-I\|$}}
]
{eig34.dat}
\caption{Performance of algorithms for computing the unitary eigendecomposition of the matrices~\ref{mat1}-\ref{mat2} (left) and~(\ref{mat3}-\ref{mat4}) (right).  With the exception of the entries labeled ``Direct'', the entries reported in column 1 refer to the algorithms for the unitary sign decomposition used in line~\ref{line:sign} of Algorithm~\ref{alg:eig}.}
\label{tab:eig}
\end{table}

\section{Conclusion} \label{sec:conclusion}

This paper constructed structure-preserving iterations for computing the unitary sign decomposition using rational minimax approximants of the scalar function $\sign(z)$ on the unit circle.  Relative to other structure-preserving iterations, they converge significantly faster, and relative to non-structure-preserving iterations, they exhibit much better numerical stability.  We used our iterations to construct a spectral divide-and-conquer algorithm for the unitary eigendecomposition.

\bibliography{references}

\end{document}